\newtheorem{theorem}{Theorem}[section]
\newtheorem{lemma}{Lemma}[section]
\newcommand{\likelihood}{\mathcal{L}}
\newcommand{\Set}[1]{\left\{ #1 \right\}}
\newcommand{\sindex}[1]{}
\newcommand{\nindex}[1]{}
\newcommand{\www}[1]{\url{#1}}
\newcommand{\dee}[1]{\mbox{d}#1}
\newcommand{\R}{\mathbb{R}}
\newcommand{\N}{\mathbb{N}}
\newcommand{\domain}{\Omega}
\newcommand{\eps}{\varepsilon}
\newcommand{\Z}{\mathbb{Z}}
\renewcommand{\P}{\mathbb{P}}
\renewcommand{\domain}{\Omega}
\newcommand{\intdomain}{\int_{\domain}}
\DeclareMathOperator*{\argmin}{arg\,min}
\newcommand{\functionspace}{\mathcal{V}}
\newcommand{\loss}{\mathcal{L}}
\title{Some results on a class of functional optimization problems}
\author{David Rushing Dewhurst}
\begin{document}

\maketitle
%\makeacceptance
\pagenumbering{roman}

\begin{abstract}
	
\vspace{10mm}
We first describe a general class of optimization problems that describe many natural, economic, and statistical
phenomena.
After noting the existence of a conserved quantity in a transformed coordinate system, we outline several 
instances of these problems in statistical physics, facility allocation, and machine learning. 
A dynamic description and statement of a partial inverse problem follow.
When attempting to optimize the state of a system governed by the generalized equipartitioning principle, it is 
vital to understand the nature of the governing probability distribution.
We show that optimiziation for the incorrect probability distribution can have catastrophic results, e.g., infinite 
expected cost, and describe a method for continuous Bayesian update of the posterior predictive distribution 
when it is stationary.
We also introduce and prove convergence properties of a time-dependent nonparametric kernel density estimate (KDE)
for use in predicting distributions over paths.
Finally, we extend the theory to the case of networks, in which an event probability density is defined over nodes and 
edges and a system resource is to be partitioning among the nodes and edges as well.
We close by giving an example of the theory's application by considering a model of risk propagation on a power grid.

\end{abstract}

\begin{dedication}
	
	\flushright{ {\it in memory of} 
\\
\vskip 2em
David Conrad Dewhurst (1918-2005)
\\
\vskip 2em
Eloise Linscott Dewhurst (1922-1999)
\\
\vskip 2em
Margaret Jones Hewins (1923-2004)
\\
\vskip 2em
	\textit{A formless chunk of stone, gigantic, eroded by time and water, though a hand, 
	a wrist, part of a forearm could still be made out with total clarity.\\
	-R. Bola\~{n}o}}
	
\end{dedication}

\begin{acknowledgements}

  Where to begin? 
First, to my advisors: Chris Danforth, Peter Dodds, Brian Tivnan, and Bill Gibson. 
They have helped me in nondenumerable ways over the years I have known them,
With this thesis, of course,
	but also with various issues---``I need to get paid!", ``My students hate me!", ``The data isn't there!", and
	other fun incidents---as well in terms of friendship; 
	our mutual relationships are marked by the essential requirement that I refer to them exclusively by
	their last names.
	Danforth was an indispensible help in all things administrative, as well as being an incredible professor
	in the three courses I took with him. His skills in the power clean are as remarkable as his mastery 
	of dynamical systems is deep.
	I wouldn't be sane without Dodds's help and friendship, on which I have come to rely.
	``We really do have to go home," he and I often jointly remark while sitting in his office, and continue to sit for 
	several hours more. 
	Tivnan, who in addition to being a thesis committee member is also my supervisor at the Mitre Corporation, 
	has helped guide me down the path of righteousness for the past year and a half 
	without fail. His knowledge of esoteric movie quotes is also impressive. 
	I have known Gibson for the longest of the four, and it was he who provided me with the highest-quality
	undergraduate economics experience for which one could ask. His ability to provide both calming advice
	and excoriating insult, almost simultaneously, is unrivaled; I would not be the man I am without his
	guidance.
	To all four of you gentlemen: thank you, truly.
	\medskip

	\noindent
	To the entire graduate faculty and administration with whom I've interacted: thank you for your patience
	as I, a fundamentally nervous person, bombarded you with questions. I am particularly thankful to Sean
	Milnamow for putting up with my ceaseless queries regarding financial aid and to Cynthia Forehand
	for having the fortitude to admit me to graduate study in the first place. 
	To James Wilson, Jonathan Sands, and Richard Foote: thank you for your tireless effort in teaching
	me real and complex analysis. The memories of staying up late at 
	night to finish my assignments will stay with me for the rest of my life.
	It is rare to realize that you will miss something forever as it is passing,
	but you have given me those moments
	and I will be forever grateful for that in a way I cannot express.
	To Marc Law, whose undergraduate economics courses have shaped the way I view the world: your words and 
	lessons will be felt in everything I do in public life.
	\medskip

	\noindent
	To my fellow graduate students, Ryan Grindle, Ryan Gallagher, Kewang, Damin, Shenyi, Francis,
	Marcus, Sophie, Michael, Rob, and Ben: 
	thank you for making my coursework enjoyable and sharing ideas, recipes,
	and laughter with me. 
	To my calculus classes I've taught: I cannot thank you enough. You have made me work and I enjoyed 
	every second of it. Some of the happiest moments of my life came when you told me that my teaching
	made you love mathematics again, or for the first time. 
	\medskip

	\noindent
	To my good friends, Colin van Oort and John Ring:
	Let the saga continue.
	To Alex Silva: I'll be home soon.
	To my parents, Sarah Hewins and Stephen Dewhurst: thank you for teaching me how to write and how to
	think. To my fiance, Casey Comeau: you know what I'm going to say.
	And to K.: just hang on...

\end{acknowledgements}

\newpage

\tableofcontents
\newpage

\listoffigures
\newpage

\listoftables
\newpage

\doublespacing
\pagenumbering{arabic}

\chapter{The generalized equipartitioning principle}

\begin{quote}
We describe a general class of optimization problems that describe many natural, economic, and statistical
phenomena.
After noting the existence of a conserved quantity in a transformed coordinate system, we outline several 
instances of these problems in statistical physics, facility allocation, and machine learning. 
A dynamic desription and statement of a partial inverse problem follow, along with questions for further
research.
\end{quote}

% intro section
\section{Introduction and background}
Methods for solving continuous optimization problems are almost as old as calculus, which was developed in the 17th century.
Johann Bernoulli posed and solved the famous problem of determining the curve of minimal travel time traced out by a particle 
acting only under the influence of gravity, otherwise known as the {\it brachistocrone} problem. 
A few years before, Isacc Newton (who also solved the brachistocrone problem) posed the problem of determining a solid
of revolution that experience minimal resistance when rotated through fluid.
The number of problems of this nature under consideration by the mathematical community were greatly increased 
with the advent of analytical mechanics, developed by d'Alembert, Lagrange, and others. 
They realized that Newton's classical mechanics, in which the motion of objects is described via three fundamental equations
related momentum, acceleration, and total force, could be re-expressed using the potential and kinetic energy of particles. 
This discovery revolutionized physics and made way for the formal development of the calculus of variations, which we 
use extensively in this paper.
William Rowan Hamilton further generalized this principle in his further reformulation of classical mechanics, leading
(eventually) to the formulation of quantum mechanics.
\medskip

\noindent
Optimization under uncertainty has a similarly illustrious history. 
The first academic mention of this concept appears to be due to Blaise Pascal in his formulation of the 
philosophical concept that, in choosing whether or not to believe in God, humans are performing an expected utility 
maximization procedure (though he did not state it in this manner explicity). 
Daniel Bernoulli also addressed the maximization of expected utility explicitly, providing one of the first examples of the 
modern understanding of utility functions. 
Interest in this subject flowered in the 20th century, with von Neumann and Morgenstern publishing a set of ``axioms" 
concerning rational decision-making under uncertainty that is still a foundation of economic theory today.
\medskip

\noindent
The combination of continuum problem formulation and optimization under uncertainty is a relatively new development, 
as to be well-formulated it required the development of measure-theoretic probability which was not truly complete 
until Kolmogorov's work in 1933.
The concept of finding an optimal decision field $S(x)$, where $x \in \Omega \subseteq \R^n$ and the optimizer 
attempts to mitigate events occuring according to the probability measure $P(x)$, is largely confined to statistics
(in the field of empirical risk minimization, c.f.\ Sec.\ \ref{sec:machine-learning}) and economics (in the field
of microeconomics, and particularly in the field of decision theory).
Practically, of course, it is understood heuristically by practitioners in professional fields that are fundamentally 
concerned with either profiting by purchasing and selling risk or with mitigating risk exposure, such as finance, 
insurance, and medicine.
Even where problem domains are not continuous (c.f. Chapter 3) a continuum formulation can often ease analysis; the
methods of functional analysis that underlie the contiuum formulation of problems are often applicable to problems formulated 
on lattices and other discrete structures. 
The core utility of the method lies in its ability to generate, via the machinery of the variational principle, 
sets of algebraic or differential equations that can be solved using well-known analytical tools and numerical routines.
\medskip

\noindent
Our work collates, extends, and unifies work done in three disparate areas: statistical physics, microeconomics and 
operations research, and machine learning. 
Much as neural networks can be studied (as a canonical ensemble) from the point of condensed matter theory, 
we have found that a particular class of continuum optimization problems (described in Sec. \ref{sec:theory})
can be described neatly via a simple generalized equipartitioning principle; the above problem domains are
contained wholly within this general class of problems.
\begin{figure}
\centering
	\includegraphics[width=\textwidth]{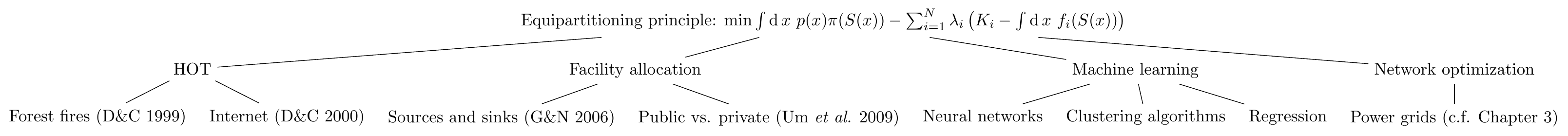}
	\caption{A partial scope of the hierarchy of problems subsumed by the generalized equipartitioning
	principle.
	Of course, not all possible realizations of this general problem are treated here. 
	In fact, this is what makes this formulation so powerful: any problem that can be recast
	in this formulation will have an invariant quantity (Eq. \ref{eq:invariant}), leading to deep insights 
	about the nature of the problem and its effect on the system in which it is embedded.}
	\label{fig:problem-scope}
\end{figure}
Figure \ref{fig:problem-scope} gives a partial scope of the hierarchy of problems treated by the generalized
equipartitioning principle. 
Under this unifying theory, we posit the existence of isomorphisms between the problems of 
minimizing the risk of a forest fire or cascading failure in the Internet, understanding the distribution 
of firms in a geographic location, and finding functions that best fit a particular dataset---tasks that {\it a 
priori} seem almost entirely unrelated.
\medskip

\noindent
We outline the theory of the generalized equipartitioning principle below and describe some classes of problems
to which it applies. 
In particular, we note that the generic supervised machine learning problem is a subclass of this formalism;
algorithms constructed for use in these problems could reasonably be applied to solve physical problems (such 
as highly-optimized tolerance and facility allocation) and, conversely, physical techniques developed in 
these areas can be tailored to solve classification and regression problems in machine learning.
Section \ref{sec:theory} gives the general theoretical results, section \ref{sec:applications} 
gives applied context, section \ref{sec:dynamic} describes the optimal allocation of resources under 
the influence of time-dependenbt coordinates, and section \ref{sec:inverse} describes the pseudo-inverse 
problem of finding the distribution for which a system was most likely optimized and suggests a method for its solution. 

% theory section 
\section{Theory}\label{sec:theory}

Let $\domain \subseteq \R^N$ and let $p:\domain \rightarrow \R$ be a probability density function,
$S:\domain \rightarrow \R$ be a resource allocation function in 
$L^1(\domain) \cap L^2(\domain)$, and 
$\pi: \R \rightarrow \R$ be a differentiable net benefit function.
Consider the optimization problem
\begin{equation}\label{eq:general-optim}
\begin{aligned}
	\max &\intdomain \dee{x}\ p( x) \pi(S( x)) \\
	&\quad \text{s.t.\ } \intdomain \dee{x}\ f_i(S( x)) = K_i,\ i=1,...,M.
\end{aligned}
\end{equation}
The action associated with this problem is 
\begin{equation}\label{eq:general-action}
\begin{aligned}
	J = \intdomain \dee{x}\ p( x) &\pi(S(x)) \\ 
	&-\sum_{i=1}^M\lambda_i\left(K_i - \intdomain \dee{x}\ f_i(S(x))\right),
\end{aligned}
\end{equation}
where $f_i: \R \rightarrow \R$ are constraint functions.
The optimal state of the system is given by $\delta J / \delta S = 0$,
which here takes the form
\begin{equation}\label{eq:static-eq}
	p(x)\frac{\partial \pi}{\partial S} + 
	\sum_{i=1}^M \lambda_i \frac{\partial f_i}{\partial S} = 0.
\end{equation}

We consider diffusion of the probability density $p(x)$.
Solution of the diffusion equation with $q(x,0) = p(x)$ and the Neumann boundary conditions on $\Omega$ 
(necessary, as we must have zero probability flux) defines a functional transform $p(x) 
\xmapsto{\mathcal{D}} \text{Unif}$, where $\text{Unif}$ is the uniform distribution on $\Omega$. 
The resultant steady state of the diffusion equation is $\langle p \rangle$. 
Transforming $x \mapsto \mathcal{D}(x)$ and substitution into Eq. \ref{eq:static-eq} results in 
the expression 
\begin{equation}\label{eq:invariant}
	\langle p \rangle =
	-\frac{\lambda_{\ell}\frac{\partial f^{(\ell)}}{\partial S}}{\partial \pi / \partial S},	
\end{equation}
(where we are now employing the Einstein summation convention),
showing that the quantity 
$\lambda_{\ell}\frac{\partial f^{(\ell)}}{\partial S}/\frac{\partial \pi}{ \partial S}$ remains 
constant across $\Omega$.
This quantity is seen to be a function of the constraint-weighted marginal benefit;
in the transformed coordinate system, marginal benefit is inversely proportional to (constant) event probability
and proportional to the weighted constraint gradient.

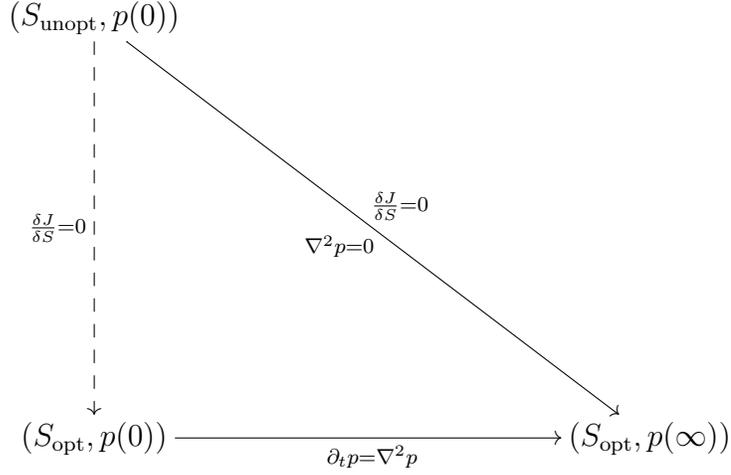
\begin{figure}
\begin{equation*}
\xymatrix@=12em{
	(S_{\text{unopt}}, p(0)) \ar@{-->}[d]_{\frac{\delta J}{\delta S}=0}
\ar[dr]_{\nabla^2 p = 0}^{\frac{\delta J}{\delta S} = 0}
	& ~ \\
	(S_{\text{opt}},p(0)) \ar[r]_{\partial_t p = \nabla^2 p} & (S_{\text{opt}}, p(\infty))
}
\end{equation*}
	\caption{
		A diagrammatic representation of the optimization process. 
		The edge with $\nabla^2 p = 0$ and $\delta J/\delta S = 0$ gives an immediate transform from 
		the initial unoptimized system $(S_{\text{unopt}}, p(0))$ to the optimized system in the coordinates
		$x \mapsto \mathcal{D}(x)$, written $(S_{\text{opt}}, p(\infty))$.
		The link from $(S_{\text{unopt}}, p(0)) $
		to $(S_{\text{opt}}, p(0))$ shows the relaxation to the optimal state 
		given by $\delta J/\delta S =0$ in the natural (un-diffused) coordinate system. 
		Subsequently diffusing the coordinates via solution of $\partial_t p = \nabla^2 p$ 
		again gives the diffused and optimized state $(S_{\text{opt}}, p(\infty))$. 
				}
\label{fig:diagram}
\end{figure}

\section{Application}\label{sec:applications}
We consider three systems in particular (with a note on the equipartition theorem first):
Doyle and Carlson's models of highly optimized tolerance (HOT)
\cite{carlson1999highly,carlson2000highly};
Gastner and Newmans's approach to the optimal facility allocation
($k$-medians) problem \cite{gusein1982bunge,gastner2006optimal}; and a generalized form  
of supervised machine learning \cite{zou2005regularization}.

\subsection{Statistical mechanics: the equipartition theorem}
The well-known equipartition theorem is a simple consequence of this formalism. 
Let $\P$ be a probability measure on phase space and let $\dee \Gamma = \prod_i \dee x_i \dee p_i$ be 
phase space differential. 
Denoting the Hamiltonian of the system by $\mathscr{H}(p,x)$, the integral to minimize is given by
\begin{equation}
	\int \dee \P(\Gamma) \mathscr{H}(p,x) - \lambda 
	\left(1 - \int \dee \Gamma\ e^{-\beta \mathscr{H}(p,x)}  \right).
\end{equation}
Performing the optimization gives the value of the Hamiltonian at the optimum to be
\begin{equation}
	\mathscr{H}(p,x) = -T \log \frac{\P(\Gamma)}{Z},
\end{equation}
where $T$ is the thermodynamic temperature and $Z$ is the partition function. 
The equipartition principle follows via integration by parts of the constraint equation. 
The usual connections to information theory also follow: denoting the information contained in the 
random variable
$\Gamma$ by $\mathcal{I}(\Gamma) = - \log \P(\Gamma)$, 
we can rewrite the minimal energy Hamiltonian as $\mathscr{H}(p,x) = T(\log Z + \mathcal{I}(\Gamma))$.
Substitution into the objective function gives
$$
\begin{aligned}
	\int \dee \P(\Gamma) \mathscr{H}(p,x) &= \int \dee \P(\Gamma) 
	\left[ T(\log Z + \mathcal{I}(\Gamma)) \right] \\
	&= T(\log Z + H(\Gamma)),
\end{aligned}
$$
where $H(\Gamma)$ is the entropy of $\Gamma$.
In physics the probability measure $\P$ is the uniform distribution over state space.
In the systems considered below this is almost universally not the case; indeed, the interesting behavior in 
such systems is partially generated by the inhomogeneity of the probability distributions over their 
``phase space".

\subsection{HOT}
Carlson and Doyle introduced the idea of highly-optimized tolerance (HOT) in a series of papers in 1999 and 2000 
\cite{carlson1999highly, carlson2000highly}.
Of the previous work known to the author that is related to this paper, Carlson and Doyle came closest to uncovering the 
true generality of the generalized equipartitioning principle. 
They found that many physical systems are created, via evolution or design, to minimize expected cost due to events 
ocurring with some distribution $p(x)$ over state space $x \in \Omega \subseteq \R^2$.
A purely physical argument relating event cost to area affected by the event, $C \propto A^{\alpha}$,
and subsequently relating affected event area to the amount of system resource in the area, $A \propto S^{-\beta}$, 
gave the expected cost to be $\int \dee x\ p(x) C(x) \propto \int \dee x\ p(x) S(x)^{-\alpha \beta}$.
Carlson and Doyle supposed the constraint on the system took the form of a maximum available amount of the system
resource, $K = \int \dee x\ S(x)$. 
The integral to minimize is thus
\begin{equation}
	\int_{\Omega}\dee{x}\ p(x)S^{-\gamma}(x)
	- \lambda\left(K - \int_{\Omega}\dee{x}\ S(x)\right),
\end{equation}
giving the optimum $S(x) \propto p(x)^{\frac{1}{\gamma + 1}}$.
They showed that this result is reflected empirically in the distribution of forest fire breaks.
Figure \ref{fig:hot} shows evolution to the HOT state as proposed in \cite{carlson1999highly}.
The evolution results in structurally-similar final states regardless of spatial resolution, as shown in 
the figure.
\begin{figure}[!htp]
\centering
	\includegraphics[width=\textwidth]{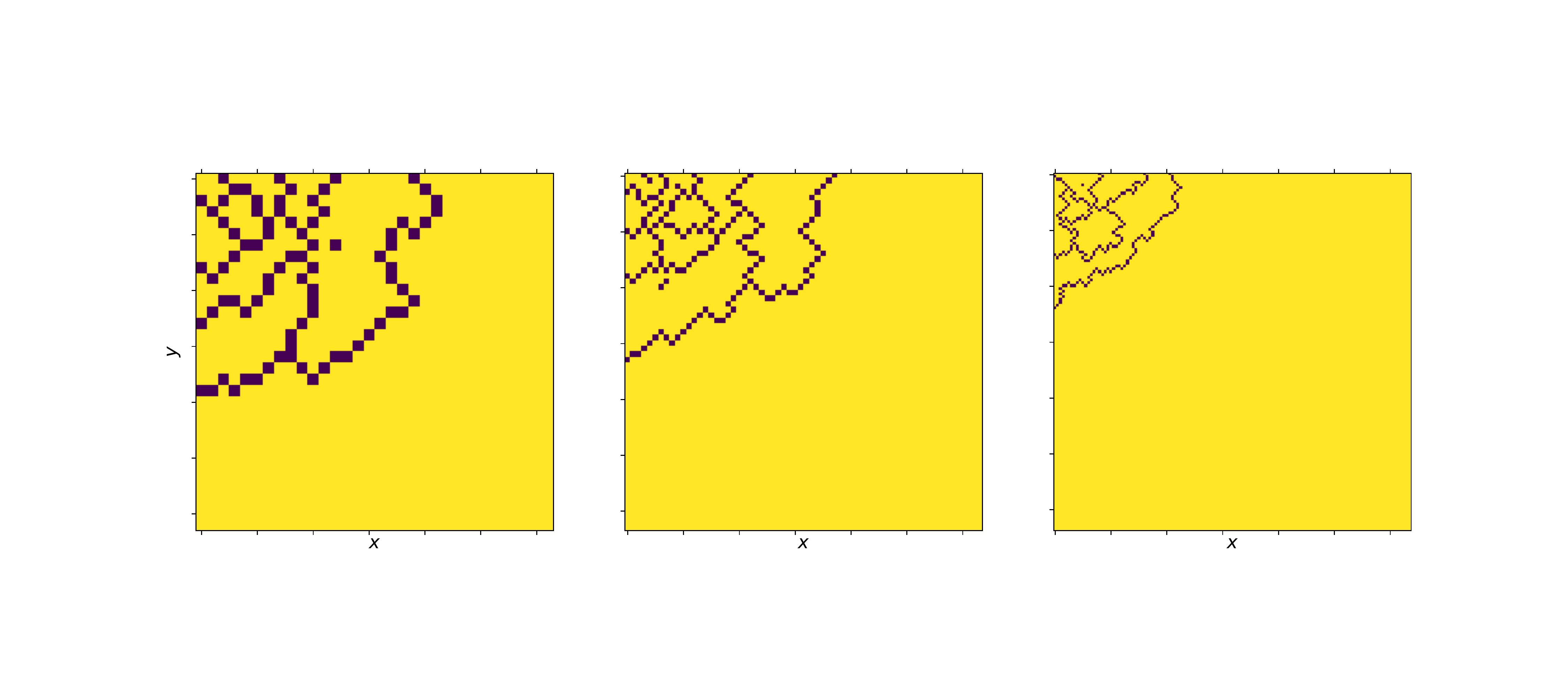}
	\caption{Realizations of evolution to the HOT state as proposed in Carlson and Doyle.
	The ``forest" is displayed as yellow while the ``fire breaks" are the purple boundaries.
	The evolution to the HOT state results in structurally-similar low-energy states
	regardless of spatial resolution, as shown here. 
	From left to right, 32 $\times$ 32, 64 $\times$ 64, and 128 $\times$ 128 grids. 
	The probability distribution is $p(x,y) \propto \exp( -(x^2 + y^2) )$ 
	defined on the quarter plane with the origin $(x,y) = (0,0)$ 
	set to be the upper left corner.}
	\label{fig:hot}
\end{figure}

\subsection{Facility placement}
A classical problem in geography and operations research is to minimize the median (or average) distance
between facilities in the plane.
This problem, known as the $k$-medians (or $k$-means) problem, is $\mathbf{NP}$-hard, so approximation algorithms and
heuristics are often used to approximate general solutions. 
Considering the objective function corresponding to the median distance between facilities,
$\int p(x) \min_{i = 1,...,k} ||x - x_i ||\ \dee x$, Gastner and Newman 
found the optimal solution in two dimensions to scale as $S(x) \propto p(x)^{2/3}$, where here $S$ is 
interpreted as facility density ($S \propto A^{-1}$) and $p$ as population density. 
The $N$-dimensional version of this problem follows by minimizing the integral
\begin{equation}
	\int_{\Omega}\dee{x}\ p(x) V(x)^{1/N}  - 
	\lambda\left( K - \int_{\Omega}\dee{x}\ V(x)^{-1} \right),
\end{equation}
which leads to a solution of the form $S(x) \propto p(x)^{\frac{N}{N+1}}$, notably resulting in
$\gamma = 2/3$ scaling in $N = 2$ dimensions (as found by Gastner and Newman) and 
$\gamma = 3/4$ in $N=3$ dimensions.
Considering instead the {\it average} (least squares) distance between facilities results in the minimization of 
\begin{equation}
	\int_{\Omega}\dee{x}\ p(x) V(x)^{2/N}
	- \lambda\left( K - \int_{\Omega}\dee{x}\ V(x)^{-1} \right),
\end{equation}
resulting in optima given by $S(x) \propto p(x)^{\frac{N}{N+2}}$, e.g., $\gamma = 1/2$ in $N = 2$ dimensions
and $\gamma = 3/5$ in $N = 3$ dimensions.

\subsection{Machine learning}\label{sec:machine-learning}
We give a short overview of the general supervised machine learning problem in $\R^N$.
We observe data $x \in \R^N$ and wish to predict values $y \in \R$, some of which we also observe,
based on these data.
In general, we fit a model $S(x)$ to the data and evaluate its error against $y$ via a loss function
$\loss(y, S(x))$. 
The data is distributed $x \sim p(x)$, although in any applied context this distribution is never known. 
Thus the general \textit{unconstrained} problem is to search a particular space of functions $\functionspace$ 
for a function $S^*$ such that
\begin{equation}
	S^*(x) = \argmin_{S \in \functionspace} \int_{\R^N}\dee{x}\ p(x)\loss(y,S(x)).
\end{equation}
The empirical approximation of this problem often goes by the moniker of empirical risk minimization.
\medskip

\noindent
It is often desireable to impose restrictions on the function $S^*$. 
For example, one may wish to limit the size of the function as measured by its $L^1$ or $L^2$ norms, or 
to mandate that the function assign a certain value to a particular subdomain $D \subseteq \R^N$.
A common example is that of the \textit{elastic net}, introduced by Zou and Hastie in 
\cite{zou2005regularization}, which penalizes higher $L^1$ and $L^2$ norms. 
The solution to the corresponding constrained problem is thus
\begin{equation}\label{eq:regularized}
	\begin{aligned}
		S^*(x) =&\argmin_{S \in L^1(\R^N) \cap L^2(\R^N)} 
		\int_{\R^N}\dee{x}\ p(x)\loss(y,S(x))  \\
		&\qquad\quad- \lambda_1\left(M_1 - \int_{\R^N}\dee{x}\ |S(x)|\right)\\
		&\qquad\quad-\lambda_2\left(M_2 - \int_{\R^N}\dee{x}\ S(x)^2 \right).
	\end{aligned}
\end{equation}
\medskip

\noindent
This formulation, while perhaps unduly formal, does encapsulate the entirety of this field, from the simplest
of examples (linear regression) to the most complicated (deep neural networks). 
Restricting the function space $\functionspace$ to be linear functions $X \mapsto X\beta$, approximating 
$p(x) \approx \frac{1}{N}\sum_{i=1}^N \mathds{1}_{x_i}(x)$, and solving the unconstrained version of the problem 
using the mean squared error loss function 
gives $\min_{\beta} \frac{1}{N}\sum_{i=1}^N (Y_i - x_i^T \beta)^2 
= \min_{\beta}||Y - \beta X||_2^2$, which is easily 
seen to be the canonical ordinary least squares problem, while incorporating $L^2$ regularization as above 
gives $\min_{\beta} ||Y - \beta X||_2^2 + \lambda ||\beta||_2^2$, the ridge regression problem 
\cite{zou2005regularization}.
On the other end of the model complexity spectrum, approximating $p(x)$ via a variational autoencoder 
\cite{kingma2013auto} and subsequently fitting a regularized deep neural network 
perhaps most closely approximates the 
true, continuum form (Eq.\ \ref{eq:regularized}) due to the function-approximating properties of neural networks.
The ability to closely approximate the true form of the action integral may explain these models' 
success in many forms of classification and regression \cite{cybenko1989approximation, hagan1994training}.
We note also that the isometry between physical problems, such as HOT, and supervised machine learning 
problems mean that 
algorithms developed for the latter may be used to great utility in the former; instead of laboriously 
constructing highly-optimized forest fire breaks via artificial evolution, as done in \cite{carlson1999highly}, 
or using computationally-intensive simulated annealing algorithms to allocate facilities, as in 
\cite{gastner2006optimal}, one may simply use a fast approximation algorithm, 
such as $k$-medians or SVM, to obtain the same result.
Conversely, insights from physical problems could be used to create new machine learning algorithms or 
paradigms, e.g., 
in the inference of more effective loss functions for regression or classification problems.

\subsection{Empirical evidence}
\label{sec:evidence}
We provide empirical evidence for the hypothesis by constructing realizations of the diffusion transform acting
on disparate datasets and for a variety of probability distributions.
Figure \ref{fig:equipartition} displays the equipartitioning process as applied to the facility allocation 
problem (using simulated data) and a binary classification problem implemented via 
support vector machine (SVM) (using the Wisconsin 
breast cancer dataset \cite{wolberg1993breast}).
\medskip
\begin{figure*}[!htp]
  \centering	
	\includegraphics[width=0.75\textwidth]{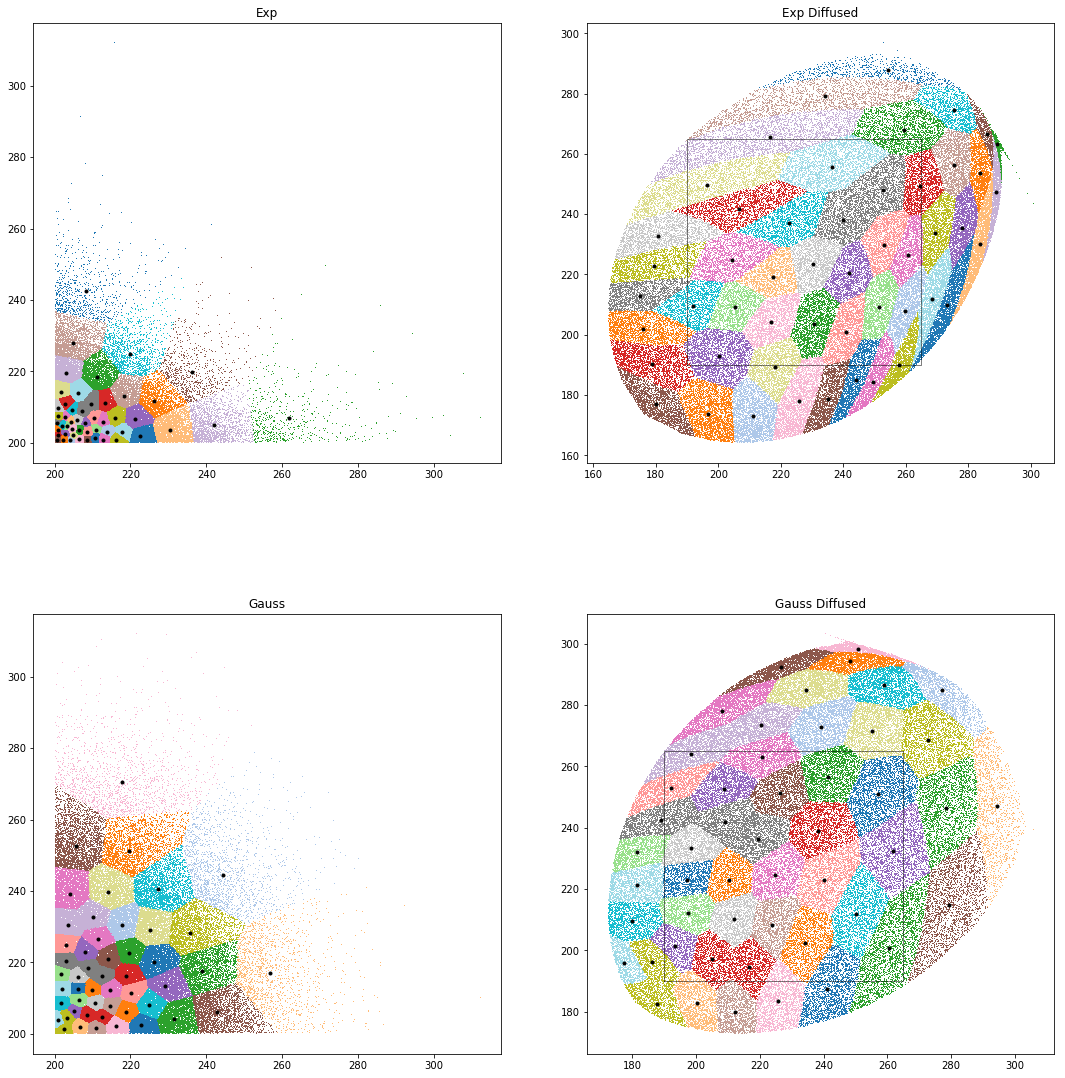}  \\
	\includegraphics[width=0.75\textwidth]{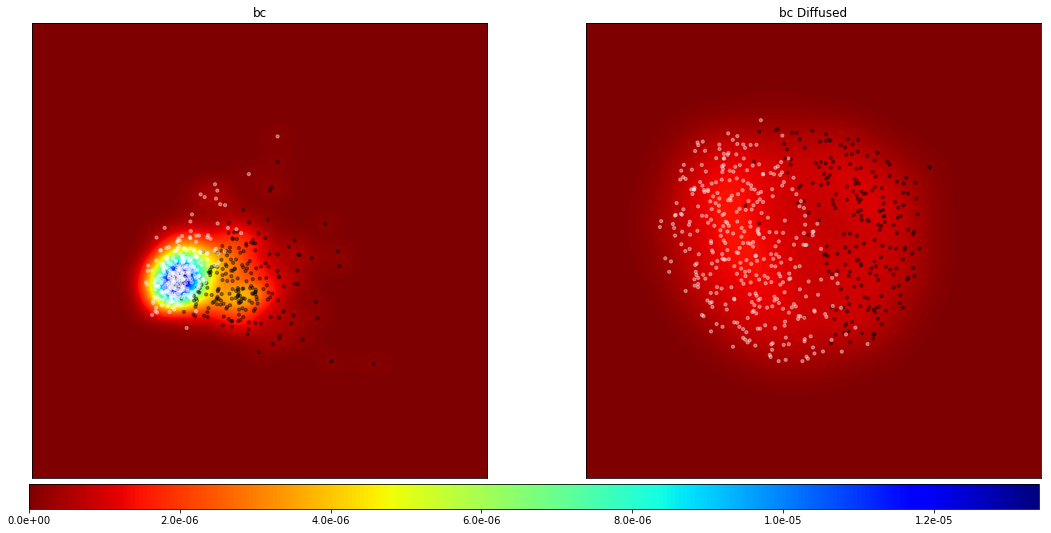}  
  \caption{
    \textbf{ The equipartitioning principle }
The equipartitioning principle as observed in facility allocation and machine learning. 
	Here, the support vector machine (SVM) algorithm is used for binary classification and class labels 
	are displayed.
The SVM loss function, known as the {\it hinge loss}, is given in 
	its continuum form by $\mathcal{L}(S) = \max \{0, 1 - Y(X)S(X)\}$, 
which is commonly minimized subject to $L^1$ and $L^2$ constraints as discussed above.
  }
  \label{fig:equipartition}
\end{figure*}

\noindent
The top and middle figure display the result of heuristically solving the $k$-medians problem using
the standard expectation-maximization (EM) algorithm.
Beginning with two different distributions (Gaussian and exponential) defined on the quarter plane 
$\Set{(x,y) \in \R^2: x \geq 0,\ y\geq 0}$, 
the EM algorithm is run with the specification that $N = 50$ locations be placed---that is, the 
constraint is $\int \dee x\ A(x)^{-1} = 50$ in the manner of \cite{gastner2006optimal}, 
as area is two-dimensional volume---and 
optimized locations are shown on the left.
The diffusion equation is then solved numerically (using Fourier cosine series) and the facilities' locations are 
transformed via the resulting diffusion transformation; the results are plotted on the right.
\medskip

\noindent
The bottom figure displays the result of learning a binary classification of subjects into breast cancer $(Y = 1)$
/ no breast cancer ($Y=0$) categories using SVM.
As the probability of a subject having breast cancer is (in untransformed space) much less probable than 
not having breast cancer, the left plot, which shows the result of the classification, has a high density of 
subjects $Y = 0$ and a much more diffuse density of $Y = 1$.
When the diffusion transform is applied and the result plotted on the right, the density is nearly equalized. 
We note that, in diffused coordinates, the decision boundary of the SVM is given by a vector that splits the 
data essentially in half, consistent with the imposed constraint that there be only two classes; 
the classes are equipartitioned across the space.

\section{Dynamic allocation}\label{sec:dynamic}
Thus far we have restricted our attention to a static problem; implicitly we have assumed that there is
no cost associated in transporting $S$ from location to location. 
While transport costs can safely be neglected in many scenarios, still others remain in which transport
is a primary consideration. 
We now generalize the above result to a dynamic result for the time-dependent field $S(x,t)$ 
where $x$ is some finite-dimensional vector that may depend on time (the case of moving coordinates 
is treated explicitly). 
We will consider only the cost minimization problem as the above-treated net-benefit maximization is
essentially identical.
Assume a cost function of the form 
\begin{equation}
	C_{\text{total}} = C_{\text{transport}} + \langle C_{\text{event}}\rangle 
	+ C_{\text{constraint}},
\end{equation}
where the expectation is taken over all $(x,t)$ with respect to $p(x,t)$. 
We will assume that the transport costs are proportional to a suitably generalized notion of the work
done on the resource in the process of moving it; letting $W$ be the work, we suppose
\begin{equation}
	C_{\text{transport}} \propto W^{\alpha/2} \propto \frac{1}{\alpha}\left(\frac{DS}{Dt}\right)
	^{\alpha},
\end{equation}
where $1 \leq \alpha \leq 2$ and $\frac{D}{Dt} = \frac{\partial}{\partial t} 
+ \frac{dx_i}{dt}\frac{\partial}{\partial x_i}$, the material derivative, which is 
the correct generalization of the derivative in the case of moving coordinates $\frac{dx}{dt} = g(x,t)$.
(The reader should note that when coordinates are stationary this becomes the standard time operator 
$\frac{\partial}{\partial t}$ as usual.)
We seek a minimum of the action $\int\dee{t}\ \int\dee{x}\ \mathscr{L}$,
where $\mathscr{L}$ is the Lagrangian density given by
\begin{equation}\label{eq:lagrangian-dynamic}
	\mathscr{L} = \frac{1}{\alpha}\left(\frac{DS}{Dt}\right)^{\alpha}
	 - p(x,t)L(S(x)) - \lambda_{\ell}f^{(\ell)}(S(x)), 
\end{equation}
Introducing the generalized momentum $\Pi = \frac{\delta \mathscr{L}}{\delta D_tS}$
the Hamiltonian density is given by 
\begin{equation}
	\begin{aligned}
		\mathscr{H} &= \frac{DS}{Dt}\Pi - \mathscr{L}\\
		&= \frac{\alpha - 1}{\alpha} \Pi^{\frac{\alpha}{\alpha - 1}} + p(x)L(S(x)) 
		+\lambda_{\ell}f^{(\ell)}(S(x)).
	\end{aligned}
\end{equation}
Hamilton's field equations are $\frac{D\Pi}{dt} = - \frac{\delta \mathscr{H}}{\delta S}$ and
$\frac{DS}{Dt} = \frac{\delta \mathscr{H}}{\delta \Pi}$, resulting in 
\begin{align}\label{eq:hamiltons}
	\frac{D\Pi}{Dt} &= - p(x,t)\frac{\partial L}{\partial S} - \lambda_{\ell}
	\frac{\partial f^{(\ell)}}{\partial S} \\
	\frac{DS}{Dt} &= \Pi^{\frac{1}{\alpha - 1}}.
\end{align}
A proof of correctness is given in Appendix A.
Two cases bear special mention. 
When $\alpha = 2$ and coordinates are stationary, these are the standard Hamiltonian field equations 
$\frac{\partial \Pi}{\partial t} = - \frac{\delta \mathscr{H}}{\delta S}$ and 
$\frac{\partial S}{\partial t} = \Pi$, resulting in the expected oscillatory behavior of $S$ in time.
When $\alpha = 1$ and coordinates are stationary, we have 
\begin{equation}
	\frac{D\Pi}{Dt} = \frac{\partial \Pi}{\partial t} 
	= (\alpha - 1)\left(\frac{\partial S}{\partial t}\right)^{\alpha - 2}
	\frac{\partial^2S}{\partial t^2} = 0,
\end{equation}
along with $\lim_{\alpha \rightarrow 1^+} \frac{DS}{Dt} 
= \lim_{\alpha \rightarrow 1^+}\Pi^{\frac{1}{\alpha - 1}} \rightarrow +\infty$,
translating to infinitely fast allocation of $S$ with the equilibrium state given by
$p(x)\frac{\partial L}{\partial S} + \lambda_{\ell}\frac{\partial f^{(\ell)}}{\partial S} = 0$---
in other words, the static optimum.
Thus the static theory is entirely recovered as a special case of the current structure, 
as expected given that $\mathscr{L} \mapsto \mathscr{L} + \text{{\bf div}}\ S$ gives
rise to the same Euler-Lagrange equations as $\mathscr{L}$.
\medskip

\noindent
We note also that disspative forces can be introduced via the Rayleigh function 
$V = \int \dee{x}\ \frac{k(x)}{2}\left( \frac{DS}{Dt} \right)^2$, whereupon 
the nonconservative force $F = - \frac{\delta V}{\delta \dot{S}} = -k(x)\frac{DS}{Dt}$
is incorporated into the Euler-Lagrange equation. 
In the case where $dx_i/dt = 0$ and $\alpha = 2$, this becomes
\begin{equation}\label{eq:damped-euler-lagrange}
	\frac{\partial^2 S}{\partial t^2} + k(x)\frac{\partial S}{\partial t} = 
	- p(x)\frac{\partial L}{\partial S} - \lambda_{\ell}\frac{\partial f^{(\ell)}}{\partial S}.
\end{equation}
In some cases the overdamped limit of Eq.\ \ref{eq:damped-euler-lagrange},
\begin{equation}\label{eq:overdamped-euler-lagrange}
\frac{\partial S}{\partial t} =- \frac{1}{k(x)}\left( p(x) \frac{\partial L}{\partial S}
+ \lambda_{\ell}\frac{\partial f^{(\ell)}}{\partial S}\right),
\end{equation}
may be a 
practical approximation to Eqs.\ \ref{eq:hamiltons} when 
$C_{\text{transport}}$ is small and observed dynamical allocation of an system resource 
relaxes monotonically to the static optimum.
We will have occasion to use Eq.\ \ref{eq:overdamped-euler-lagrange} in the context of 
inferring the probability $p(x,t)$ in Chapter 2; in fact, it should be noted that a time discretization 
of Eq.\ \ref{eq:overdamped-euler-lagrange} corresponds exactly to 
minimization of Eq. \ref{eq:general-action} via functional gradient descent, written as 
\begin{equation}\label{eq:func-grad-descent}
	S_{n+1}(x) = S_n(x) - \gamma \nabla_S \mathscr{L}_{\text{static}}(S_n(x)),
\end{equation}
where the learning rate $\gamma$ corresponds with the inverse friction $k(x)^{-1}$
and $\mathscr{L}_{\text{static}}(S) = p(x) L(S(x)) + \lambda_{\ell}f^{(\ell)}(S(x))$.

\section{Discovery of underlying distributions}\label{sec:inverse}
We now consider the psuedo-inverse problem to the one discussed above and 
propose an algorithm for its solution.
Suppose we observe a noisy representation of a system resource $Y(x) = S(x) + \eps$
that is {\it prima facie} distributed unequally over some domain.
We wish to find the density distribution $p(x)$ in accordance with which the system resource is 
optimally distributed as outlined above. 
If given a family of candidate distributions $\{p_i(x)\}_1^{n}$ and 
a family of candidate models $\{f_j(x|p)\}_{j=1}^{m}$, we may determine the most likely
underlying distribution as follows:
for each distribution $p_i$ and candidate function $f_j$, fit the model $\hat{Y}_{i,j}(x) = f_j(x|p_i)$.
Let $\mathcal{D}_i$ be the functional defined by the solution to $\partial_t q = \nabla^2 q$ on $\Omega$ 
with $q(x, 0) = p_i(x)$, so that
$p_i \xmapsto{\mathcal{D}_i} \text{Unif}$, the uniform distribution.
Then compute 
\begin{equation}
\hat{Y}^{\text{diffused}}_{i,j}(x) = \mathcal{D}_i \circ f_j(x|p_i)
\end{equation}
and choose $p^*$, the optimal distribution, as 
\begin{equation}\label{eq:diff-machine}
	p^*(x) = \argmin_{p_{i}:\ i\in\Set{1,...,n},\ j\in\Set{1,...,m}} ||\nabla \hat{Y}^{\text{diffused}}_{ij}(x)||,
\end{equation}
for $||\cdot ||$ some appropriate norm.
In other words, given sets of candidate distributions and functions,
the distribution for which the system was most 
likely designed is the one that, when diffused, generates via the transform $\mathcal{D}_i \circ f_j$ 
the most evenly-distributed diffused resource $\hat{Y}^{\text{diffused}}_{i,j}$.
\medskip

\noindent
In application we will notice two hurdles that will affect the utility of this algorithm. 
First, in finite time we know that $\mathcal{D}_i$ will not actually generate the uniform distribution. 
Even if an analytical solution to the diffusion equation is used (e.g., Fourier cosine series, as used here
and in \cite{gastner2006optimal}) one must make a finite approximation. 
Second, and more problematically, there is no general principled way to construct the functions $f_i$ given
only this collection of distributions and an observed resource. 
In practice these functions must be generated either using statistical methods 
or from first principles; one may use this decision process to as a method to help determine which 
physical theory of many under consideration is more likely to be correct.
\medskip

\noindent
Implementation of the above procedure would proceed using standard methods.  
For simplicity's sake, use the $L^1$ norm and approximate as 
\begin{equation}
\begin{aligned}
||\nabla \hat{Y}^{\text{diffused}}_{i,j}(x) || 
&= \int_{\Omega} \dee{x}\ |\nabla \hat{Y}^{\text{diffused}}_{i,j}(x) | \\
	&\approx \sum_{x \in \Lambda(\Omega)} |\nabla_{\text{disc}} \hat{Y}^{\text{diffused}}_{i,j}(x) |,
\end{aligned}
\end{equation}
where $\Lambda(\Omega)$ is a lattice approximation to $\Omega$ and $\nabla_{\text{disc}}$ is a discrete
approximation to the gradient.
Calculating this quantity for each combination $(p_i, f_j)$ and taking the most evenly distributed quantity 
should give the most nearly correct distribution.

\section{Concluding remarks}

We have demonstrated a property that appears to be universal to many physical and social systems, 
summarized as follows: resources that appear to be unevenly distributed in optimized systemsial are, in fact, 
evenly distributed with respect to some other distribution on the underlying space.
This is the conserved quantity $\langle p \rangle + \lambda_{\ell}\frac{\partial f^{(\ell)}}{\partial S} / 
\frac{\partial \pi}{ \partial S}$.
This description is not limited to static systems, as we have extended the framework to allow for time-dependent
allocations {\it v\'is-a-v\'is} transport costs and even for moving coordinate systems.
In constructing this further generalization, we note that the static optimum arises naturally as a special
case.
Finally, we outline the partial inverse problem of determining a distribution $p(x)$ for which an observed quantity 
$Y = S(x) + \eps$ was most likely optimized---assuming that the system is governed by the generalized equipartitioning 
principle.
\medskip

\noindent
We note a meta-optimization procedure that is implied by the existence of the equipartitioned system. 
Let us take the context of machine learning as an example. 
We want to know the true distribution $p(x)$; we are interested in finding $S(x) \in \mathcal{V}$ that minimizes $L$; 
we should analyze the loss function $L$; we should understand the form of the $n$ constraints. 
In order to do this one must consider all factors of the minimization problem:
\begin{itemize}
	\item the probability distribution $p(x)$ (or measure $P(x)$)
	\item the loss function $L$
	\item the functional form of $S$---that, is the function space $\mathcal{V}$ and its characterization
	\item the constraints $f_i$---their functional form and their number
	\item the domain of integration $\Omega$
\end{itemize}
Each one of these components of the optimization can be analyzed and, in a sense, optimized themselves.
\begin{figure}[!htp]
\centering
	\includegraphics[width=\textwidth]{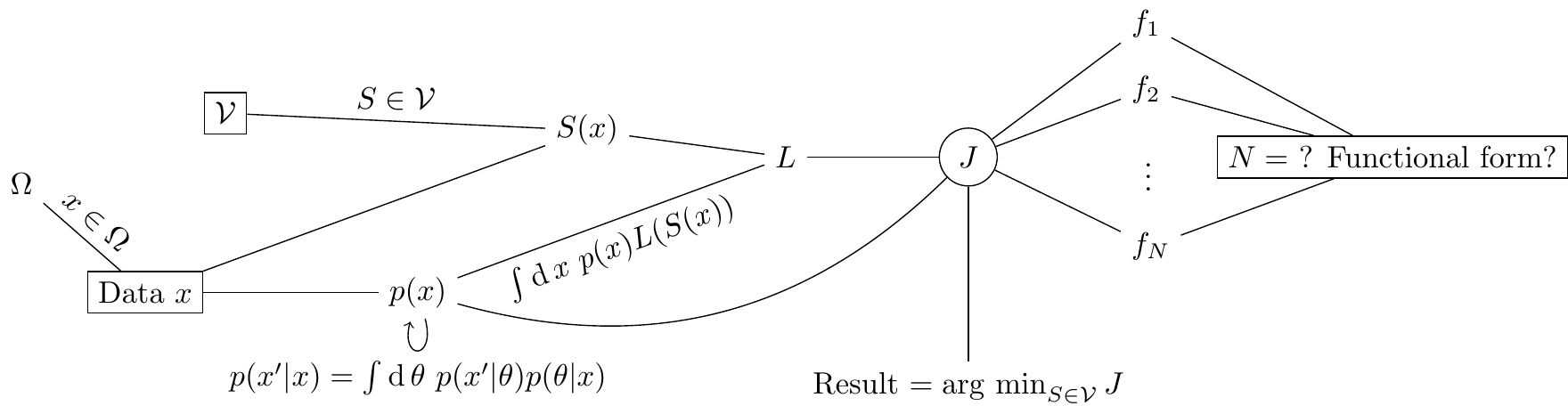}
	\caption{A decomposition of a system subject to the generalized equipartitioning principle into 
	its component parts. A system designer must consider each of these parts carefully when 
	implementing or analyzing such a system.
	In particular, we consider the specification of $p(x)$ and its inference in \\Chapter 2.}
	\label{fig:meta-optimization}
\end{figure}
Figure \ref{fig:meta-optimization} 
demonstrates this meta-optimization process and a decision mechanism for its implementation. 
Every square box is an input that is controlled by the system designer: what data to use in the model, what function 
space to search (linear, quadratic, integrable,...), what form the constraints take and how many of them there are. 
Other nodes must also be considered; while the probability of observing data is a determined quantity once a dataset 
is chosen, how that probability distribution is updated (if at all) can be determined by the system designer.
All of these components feed into the action $J$, which generates the optimal system state.

\chapter{Estimation of governing probability distribution}
\chaptermark{Estimation of governing probability distribution} 
%this is the chapter heading that will show on subsequent pages
\begin{quote}
When attempting to optimize the state of a system governed by the generalized equipartitioning principle, it is 
vital to understand the nature of the governing probability distribution.
We show that optimiziation for the incorrect probability distribution can have catastrophic results, e.g., infinite 
expected cost, and describe a method for continuous Bayesian update of the posterior predictive distribution 
when it is stationary.
We also introduce and prove convergence properties of a time-dependent nonparametric kernel density estimate (KDE)
for use in predicting distributions over paths.
\end{quote}

\section{Misspecification}\label{sec:misspec}

\subsection{Loss due to misspecification}
Consider two probability density functions $p(x)$ and $q(x)$. 
Suppose we have minimized the functional 
$$
J(p) = \int \left( p(x)L(S(x)) + \sum_{i=1}^M\lambda_i f_i(S(x))\right) \dee x,
$$
but in fact the ``true" functional to minimize is $J(q)$ --- we have mistaken $p(x)$ for the 
true density $q(x)$.
Denoting by $S^{(p)}(x) = \argmin_{S \in \functionspace} J(p; S)$ and $S^{(q)}(x) = \argmin_{S \in \functionspace} J(q; S)$,
the expected opportunity cost due to the misspecification is given by 
\begin{equation}\label{eq:opp-cost}
	\begin{aligned}
		\langle C_{\text{opp}}(p||q)\rangle &= \int \left(q(x) - p(x)\right) L(S^{(p)}(x))\ \dee x\\
		&= \left \langle C^{(p)}\right \rangle_q - \left \langle C^{(p)}\right \rangle_p,
	\end{aligned}
\end{equation}
where we use the notation $\langle C^{(u)} \rangle_v = \int v(x) L(S^{(u)}(x))\ \dee{x}$.
We will see in Sec.\ \ref{sec:misspec-consequences} that misspecification can, especially in unbounded domains, 
lead to rather dramatic consequences.
A useful quantity is the proportion of total cost incurred under the distribution $q$ by misspecifying for the distribution $p$, given by
\begin{equation}
	\begin{aligned}
		\rho(p||q) &= \frac{\langle C_{\text{opp}}(p||q)\rangle}{\left \langle C^{(p)}\right \rangle_q}\\
		&= 1 - \frac{\left \langle C^{(p)} \right \rangle_p}
		{\left \langle C^{(p)}\right \rangle_q }.
	\end{aligned}
\end{equation}
As the opportunity cost becomes the majority of the cost incurred in the system as a whole, 
we have $\rho(p||q) \rightarrow 1$; we will demonstrate an example of this presently.

\subsection{Estimation of $q$}\label{sec:estimation}
How should we estimate the true distribution $q(x)$ as we observe a time-ordered sequence of data during 
the optimization process?
The most obvious answer is to use some sort of Bayesian estimation process, but as we must (in the limit) update 
continuously in time, the method by which this is accomplished is not obvious. 
We will first outline the theory where $q(x)$ is stationary, $t \in [0,1]$, and $x \in [a, b]$; though we have not extended 
the theory to $\R^N$, it should follow directly from consultation with the particle filter literature
\cite{del1996non}.
We then extend the theory to nonstationary $q(x)$ and present a nonparametric estimation procedure that converges to the 
true distribution as time progresses.

\subsubsection{Stationary q(x)}
Updating occurs via a kind of particle filter.
Given an initial prior distribution $p(\theta_0)$ and $k$ observed data points $x_1(t_1),...,x_k(t_k)$ 
ordered so that $t_1< \cdots < t_k$, we compute the posterior 
distribution as
\begin{equation}
	p_k(\theta|x) = p(\theta_k|x_k,...,x_1) = 
	\frac{p(\theta)\likelihood_k(\theta|x)}{p(x_k)},
\end{equation}
where $\likelihood_k(\theta|x) = \prod_{j = 1}^k p(x_j|\theta_j)$ and 
$$
p(x_k) = \prod_{j=1}^k \int p(x_j|\theta_j)p(\theta_j|x_{j-1},...,x_1) d\theta_j.
$$
The posterior predictive distribution is then 
\begin{equation}\label{eq:ppd}
	p(x_{k+1}|x_k,...,x_1) 
	= \int p(x_{k+1} | \theta_k) p(\theta_k | x_k,...,x_1)\ \dee \theta_k.
\end{equation}
As $k \rightarrow \infty$, $p(x_k|x_{k-1},...,x_1) \rightarrow q(x)$; this is thus the correct distribution for the 
decision-maker to use at time $t_k$. 
We must now rectify this procedure with the continuous process of dynamically allocating the system resource 
$S(x,t)$. 
Recall from Eq. \ref{eq:overdamped-euler-lagrange} that we can consider the state of the system resource 
as evolving via gradient descent, 
\begin{equation}
\frac{\partial S}{\partial t} = -\frac{\delta J}{\delta S}.
\end{equation}
 Let $n \in \N$ be given, let $\Set{t_k}_{k=1}^n$ be a partition of $[0,1]$ with 
 $t_k - t_{k-1} = \frac{1}{n}$, and 
$\Set{x_k}_{k=1}^n$ be a partition of $[a,b]$.
From Eq. \ref{eq:func-grad-descent}, the discrete evolution equation is 
\begin{equation}\label{eq:discrete-evol-stationary}
	\begin{aligned}
		S_{k-1}(x) - S_k(x) &= -\frac{1}{n}\frac{\delta J}{\delta S} \\
		&= -\frac{1}{n}\left( p(x|x_{k-1},...,x_1)\frac{\partial L}{\partial S} 
		+\sum_{i=1}^M\lambda_i \frac{\partial f_i}{\partial S}\right), 
	\end{aligned}
\end{equation}
where we use our discrete-update posterior predictive distribution (Eq.\ \ref{eq:ppd}) as the true
probability distribution.
As the above equation is well-defined for any $n \in \N$,
taking $n \rightarrow \infty$ {\it a priori} gives the correct evolution equation for $S$ when the 
estimated probability density updates continuously in time.
We note that these probability distributions may be
indexed by a collection of information sets, $\Set{I_t}_{t\in [0,1]}$ with $I_t \subseteq I_{t+s}$,
such that for any increasing
convergent sequence $\Set{t_k}_{k=1}^{\infty} \subseteq [0,1]$, the collection $\Set{I_{t_k}}_{k=1}^{\infty}$ 
satisfies 
\begin{equation}
	p(x|x_{k-1},...,x_1) = p(x|I_{t_k}).
\end{equation}
Then the evolution equation is given by 
\begin{equation}
	\frac{\partial S}{\partial t} = - \left( p(x|I_t)\frac{\partial L}{\partial S} 
		+\sum_{i=1}^M\lambda_i \frac{\partial f_i}{\partial S} \right).
\end{equation}
\medskip

\noindent
The careful reader will note an implicit assumption in the assertion that 
$\lim_{n \rightarrow \infty} p_n \rightarrow q$.
As the sequence of observations is time ordered---we observe $x_1$, then $x_2$, etc., we are assuming that the 
process generating these samples is ergodic, to be interpreted as follows.
Let $T:[a,b] \rightarrow [a,b]$ be a measure-preserving 
transformation on the support of $q$ that evolves the observation of the data $x$. 
Let $Q$ be a probability measure on $[a,b]$ and let $f$ be a $Q$-measurable function denoting some observable, 
where $dQ/dx = q$.\footnote{To be interpreted as the Radon-Nikodym derivative for 
$Q$ absolutely continuous with respect to Lebesgue measure if $Q$ is nondifferentiable as a real function.} Then
our assumption is
\begin{equation}\label{eq:ergodic}
	\lim_{n \rightarrow \infty}\frac{1}{n}\sum_{k=1}^n f(T^k x_0) = \int_{[a,b]} f\ \dee Q
\end{equation}
almost surely, where $x_0$ is some initial point from which $T$ begins to evolve the observations.
(When the observable is the probability distribution itself we take $f = K_h$ in Eq.\ \ref{eq:ergodic} to
be some proper kernel function.)
If this requirement is not satisfied the above result does not hold.

\subsubsection{Nonstationary $q(x,t)$}\label{sec:nonstationary}
The convergence properties of the above nonparametric estimation largely carry over to the nonstationary case where
the decision-maker observes multiple event paths $\Set{X_k(t)}_{k=1}^N$ and wishes to estimate $q(x,t)$.
We will define the probability kernel to be Gaussian, 
$K_h(x,t) = \frac{1}{2\pi h}\exp(-\frac{1}{2h}(x^2 + t^2))$.
At each $X_k(t) = (x_k, t_k)$, place a kernel $K_h(x-x_k, t-t_k)$ and make the finite estimate
\begin{equation}
	G_h^{(N)}(x.t) = \frac{1}{N}\sum_{k=1}^N K_h(x-x_k, t-t_k).
\end{equation}
An appropriate limit of this function converges to the true distribution $q(x,t)$. 
To see this, note first that 
\begin{equation}\label{eq:lim-kde}
\lim_{N \rightarrow \infty} G_n^{(N)}(x,t) = \int \dee{t'}\ \int \dee{x'}\ K_h(x-x', t-t')q(x',t'),
\end{equation}
by (assumed) ergodicity,
where the integrals are meant in the Riemann sense.
(We will denote $\lim_{N \rightarrow \infty}G_h^{(N)}(x,t) = G_h(x,t)$.)
Define the Fourier transform to be 
$\tilde{F}(\omega) = F[f](\omega) = \int \dee r f(r) e^{\i \omega r}$ and 
note that 
\begin{equation}
	\begin{aligned}
		\tilde{G_h}(\omega) = F[G_h](\omega) &= \tilde{K_h}(\omega)\tilde{q}(\omega)\\
		&=\exp\left( -\i\mu \omega - \omega^T \Sigma \omega \right)\tilde{q}(\omega),
	\end{aligned}
\end{equation}
by the convolution theorem, where $\mu = (x', t')$ and $\Sigma = 
\begin{psmallmatrix}
	h & 0\\
	0 & h
\end{psmallmatrix}
$
is the covariance matrix. 
Now $\tilde{G}(\omega) = \lim_{h \rightarrow 0^+} \tilde{G_h}(\omega) = \exp(-\i \mu \omega)\tilde{q}(\omega)$, 
whereupon the inverse Fourier transform gives
\begin{equation}
	G(x,t) = \int \dee t'\ \int \dee x'\ \delta(x' - x,t'-t) q(x',t') = q(x,t),
\end{equation}
as claimed.
\medskip

\noindent
We emphasize two points. 
First, note that this is {\it not} an algorithm for predictive inference of individual sample paths
$x(t)$ but a nonparametric estimation technique for {\it a posteriori} updating about the distribution
of all sample paths arising from some process.
Second, though in the proof of convergence we set $K_h$ to be Gaussian, this is not strictly necessary. 
This proof holds in the case of an arbitrary kernel $\mathcal{K}_h(x - x', t - t')$ with 
bandwidth function $\Sigma(h)$ that satisfies
$\lim_{h \rightarrow 0^+}F[\mathcal{K}_h](\omega) = \exp(-\i \mu \omega)$, as this is at the core of the proof.

\section{Examples and application}\label{sec:examples}

\subsection{Misspecification consequences}\label{sec:misspec-consequences}

Consider a cost minimization problem
\begin{equation}\label{prob:ez-hot}
	\begin{aligned}
		\min_{S(x)} &\int q(x)C(x)\ \dee x\\
		& \text{s.t.} \int S(x)\ \dee x = K,
	\end{aligned}
\end{equation}
where $C(x) = S(x)^{-1}$, a simple form of the HOT formalism.
The solution to Eq.\ \ref{prob:ez-hot} is found to be
\begin{equation}\label{eq:ez-hot-sol}
S(x) = \frac{Kq(x)^{\frac{1}{2}}}{\int q(x)^{\frac{1}{2}}\dee x}.
\end{equation}
Suppose that we actually optimize for $p$ rather than $q$; we are interested in the quantities
$\langle C_{\text{opp}}^{(p)} \rangle$ and $\rho(p||q)$.
We will consider $p = \mathcal{N}(0, \sigma_p)$ and $q = \mathcal{N}(0, \sigma_q)$ in $\R^2$, where 
$\sigma_q \geq \sigma_p$, and consider the opportunity cost $\langle C_{\text{opp}} \rangle$ 
over $\Omega$ compact and $\R^2$.
Substituting Eq. \ref{eq:ez-hot-sol} into Eq.\ \ref{eq:opp-cost}, we have
\begin{equation}
	\begin{aligned}
		\langle C_{\text{opp}}^{(p)} \rangle 
		&= \int \dee x\ \dee y\ \left(q(x,y)p(x,y)^{-1/2} - p(x,y)^{1/2} \right)\\
		&= \int \dee x\ \dee y\ \Big[(2\pi)^{1/2}
		e^{(\frac{1}{4\sigma^2}- \frac{1}{2\sigma_q^2})(x^2 + y^2)}\\
		&\qquad\qquad- (2\pi)^{-1/2}e^{-\frac{1}{4\sigma^2}(x^2 + y^2)}\Big].
	\end{aligned}
\end{equation}
When this integral is taken over all $\R^2$, it converges only when $\sigma_q^2 < 2 \sigma_p^2$;
$\rho(p||q)$ approaches one as $\sigma_q^2 \rightarrow 2 \sigma_p^2$ when the domain of integration
is unbounded.
Figure \ref{fig:ez-hot-misspec} demonstrates this convergence on a compact domain 
$\Omega \subset [0,1] \times [0,1]$ (displayed in the inset plot) and on $\R^2$.
This emphasizes the importance of accurate estimation of $q$; in the case where $\Omega = \R^2$, we see that 
the opportunity cost quickly becomes the dominant cost term.
\begin{figure}[!htp]
\centering
	\includegraphics[width=\columnwidth]{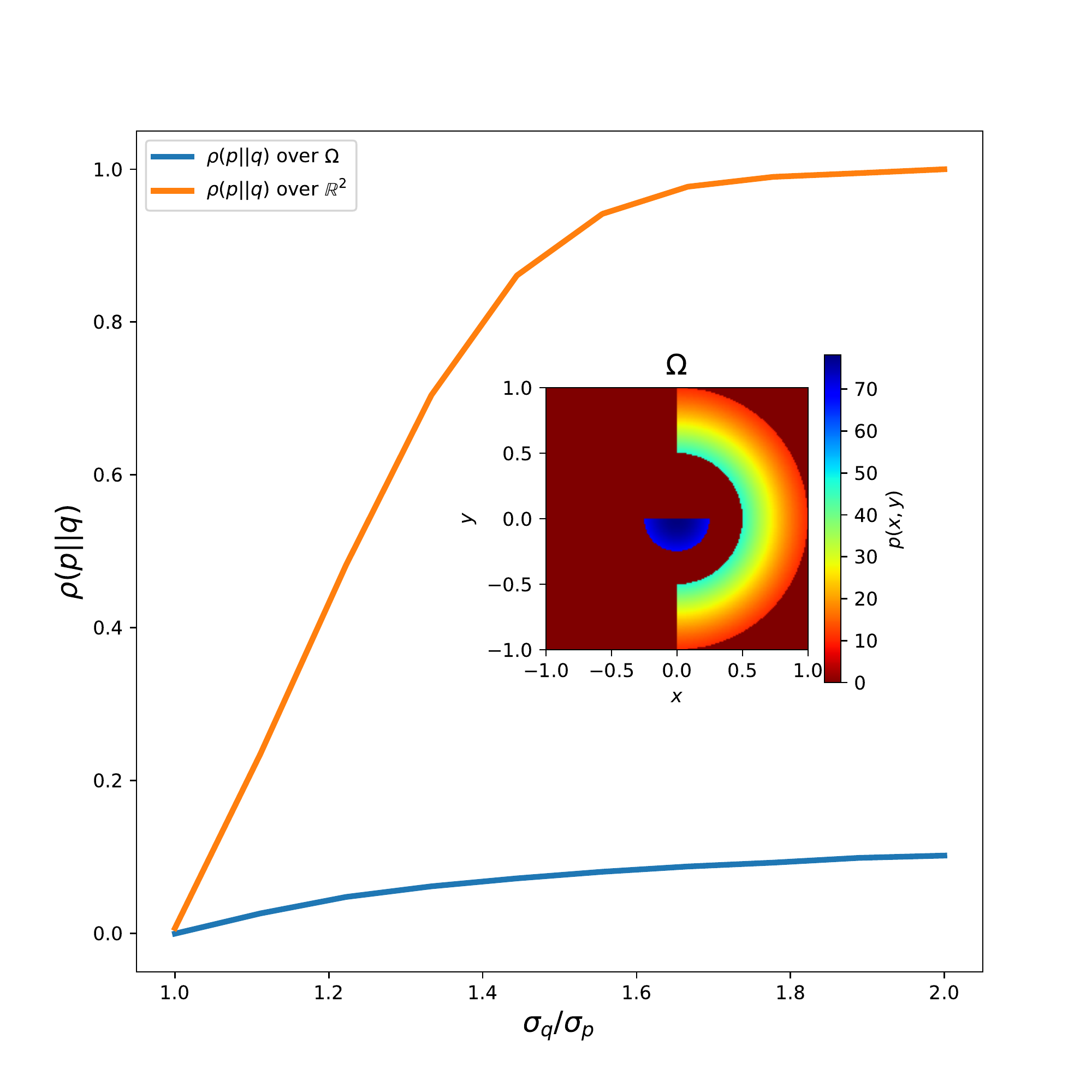}
	\caption{
		Proportion of cost due to opportunity cost in Eq.\ \ref{prob:ez-hot}.
		The probability densities $p$ and $q$ are Gaussian, with $q$'s standard
		deviation ranging from one to twice the size of $p$'s. 
		The integrals always converge on compact $\Omega$; for $\Omega$ small enough
		(in the Lebesgue-measure sense) in proportion to the standard deviation of $q$,
		the proportion converges to a relatively small value as $q$ appears more and 
		more like the uniform distribution. 
		As $\sigma_q / \sigma_p \rightarrow 2$ the integral diverges and $\rho\rightarrow 1$.
		Integrals were calculated using Monte-Carlo methods. 
		(We choose $\Omega$ to be disconnected to emphasize the notion that 
		the generalized equipartitioning principle applies to arbitrary domains.)
		}
		\label{fig:ez-hot-misspec}
\end{figure}

\subsection{Example: discrete allocation}
A practical implementation of problem \ref{prob:ez-hot} in $\R^n$ entails division of $\Omega \subset \R^n$ into 
$M$ compact subdomains to each of which the system resource is allocated; 
we can imagine that we discretize the space and allocate $M$ blobs of resource to each in order to mitigate cost
inside each blob's respective subdomain.
Supposing $x \sim \text{Categorical}(p_1,...,p_M)$ and $p \sim \text{Dirichlet}(\alpha_1,...,\alpha_M)$,
a standard calculation gives the posterior predictive distribution for each new observed $x_N$ to be 
$$
p(x_N|x_{N-1},...,x_1) = \sum_{i=1}^M \delta(x,i)\left(\frac{\alpha_i + n_i}{N + \sum_{i=1}^M \alpha_i}\right),
$$
where $\delta(x,i)$ is one if $x=i$ and zero otherwise.
Using the above analytical solution, the empirical optima are given by 
\begin{equation}
	\hat{S}(x,t_N) = \frac{K\left(\sum_{i=1}^M \delta(x,i)
	\left(\frac{\alpha_i + n_i}{N + \sum_{i=1}^M \alpha_i}\right)\right)^{\frac{1}{2}}}
	{\sum_{i=1}^M \left(\frac{\alpha_i + n_i}{N + \sum_{i=1}^M \alpha_i}\right)^{\frac{1}{2}}}.
\end{equation}
In taking the continuum limit $N \rightarrow \infty$, it is tacitly assumed that all $p_i \in (0,1)$; that is, 
we must have $n_i = \Theta(N)$ for all $i$.
The dynamic allocation of $S$ for any instance of this problem is given by the solution to the differential equation 
\begin{equation}\label{eq:ez-hot-dynalloc}
	\frac{\partial S(x,t)}{\partial t} = p(x|I_t)S(x,t)^{-2} - 
	\left( \frac{1}{K}\int p(x|I_t)^{\frac{1}{2}}\ 
	\dee x\right)^2,
\end{equation}
which here it takes a simplified form due to the discrete nature of the spatial coordinate.
Figure \ref{fig:ez-hot-categorical} demonstrates the convergence of the dynamic allocation given by 
Eq.\ \ref{eq:ez-hot-dynalloc} to the static optima as $p_k(x) \rightarrow q(x)$
as the probability is updated according to the procedure outlined in Sec.\ \ref{sec:estimation}.

\begin{figure}[!htp]
\centering
	\includegraphics[width=\columnwidth]{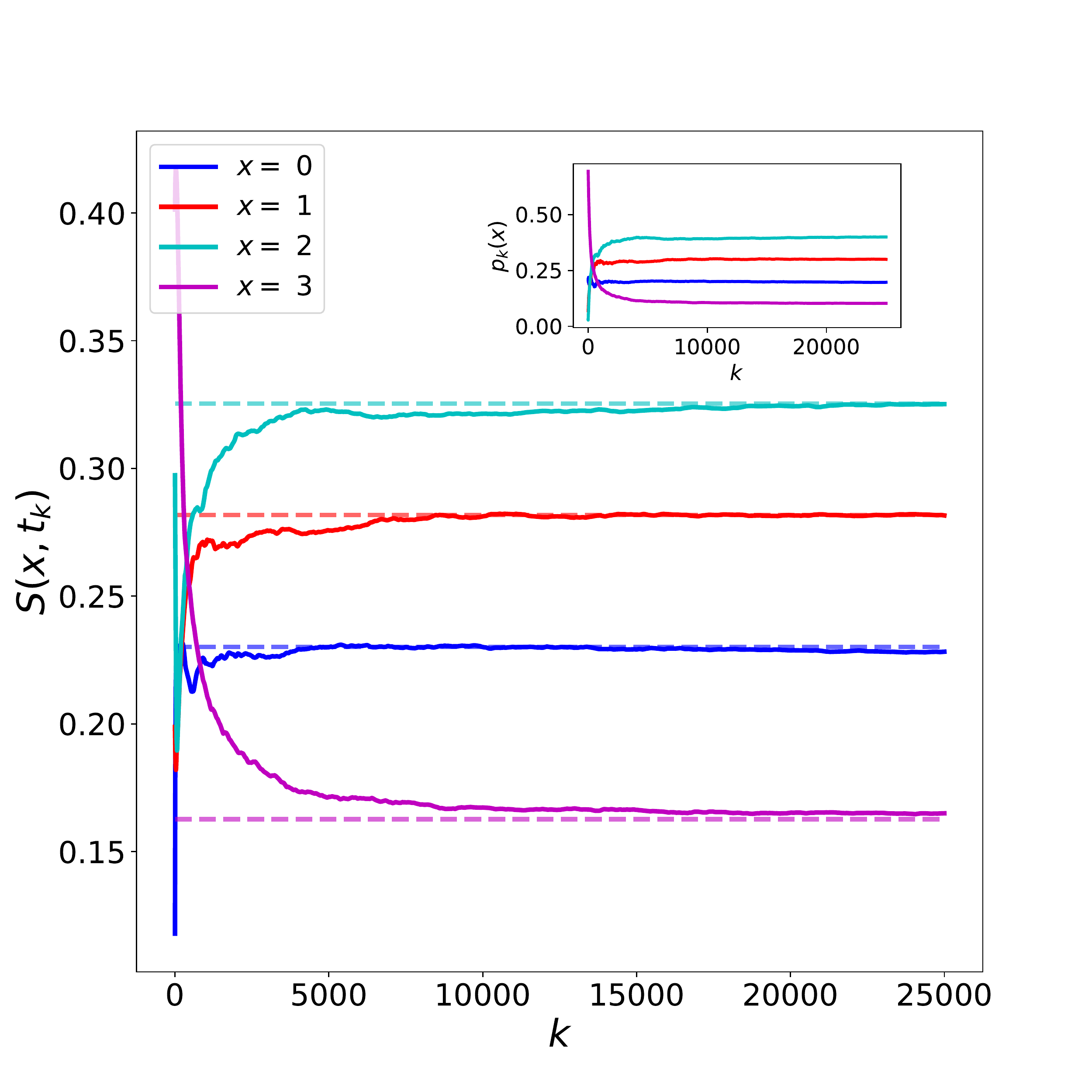}
	\caption{
		Dynamic allocation of system resource in the toy HOT problem given by Eq.\ \ref{prob:ez-hot}.
		Dashed lines are the static optima when the true distribution $q(x)$ is known. 
		The solid lines are the dynamic allocation of $S(x,t)$ as the estimate $p_k(x)$ is updated.
		The inset plot illustrates the convergence of $p_k(x)$ to the true distribution via the updating
		process described in Sec.\ \ref{sec:estimation}.
		To demonstrate the effectiveness and convergence properties of the procedure we initialize the 
		probability estimates and initial system resource allocations to wildly inaccurate values.
		}
		\label{fig:ez-hot-categorical}
\end{figure}

\subsection{Example: continuous time update with nonstationary distribution}
We briefly mentioned an example of a common nonstationary distribution in Section \ref{sec:nonstationary}; 
we continue this discussion now.
Consider the probability distribution on $\R$ induced by the Weiner process 
$\dee{X_t} = \mu\ \dee{t} + \sigma\ \dee{W_t}$ with $X_0 = x_0$.
Intuitively the stationary update process could not hope to produce a realistic estimate of the true probability
distribution $q(x,t)$, which is given by
\begin{equation}\label{eq:weiner-dist}
	q(x,t) = \frac{H(t)}{\sqrt{2\pi \sigma^2 t}}\exp \left( \frac{-(x - \mu t)^2}{2 \sigma^2 t} \right),
\end{equation}
where $H(t)$ is Heaviside's function\footnote{See Appendix A for a derivation.}. 
We implement the nonstationary updating procedure described in Sec.\ \ref{sec:nonstationary} and infer the probability 
distribution; results are displayed in Figure \ref{fig:weiner}.
The top plot displays the results of the estimation, while the bottom displays results of the true unconditional
distribution given by Eq.\ \ref{eq:weiner-dist}.
\begin{figure*}
\centering
	\includegraphics[width=\textwidth]{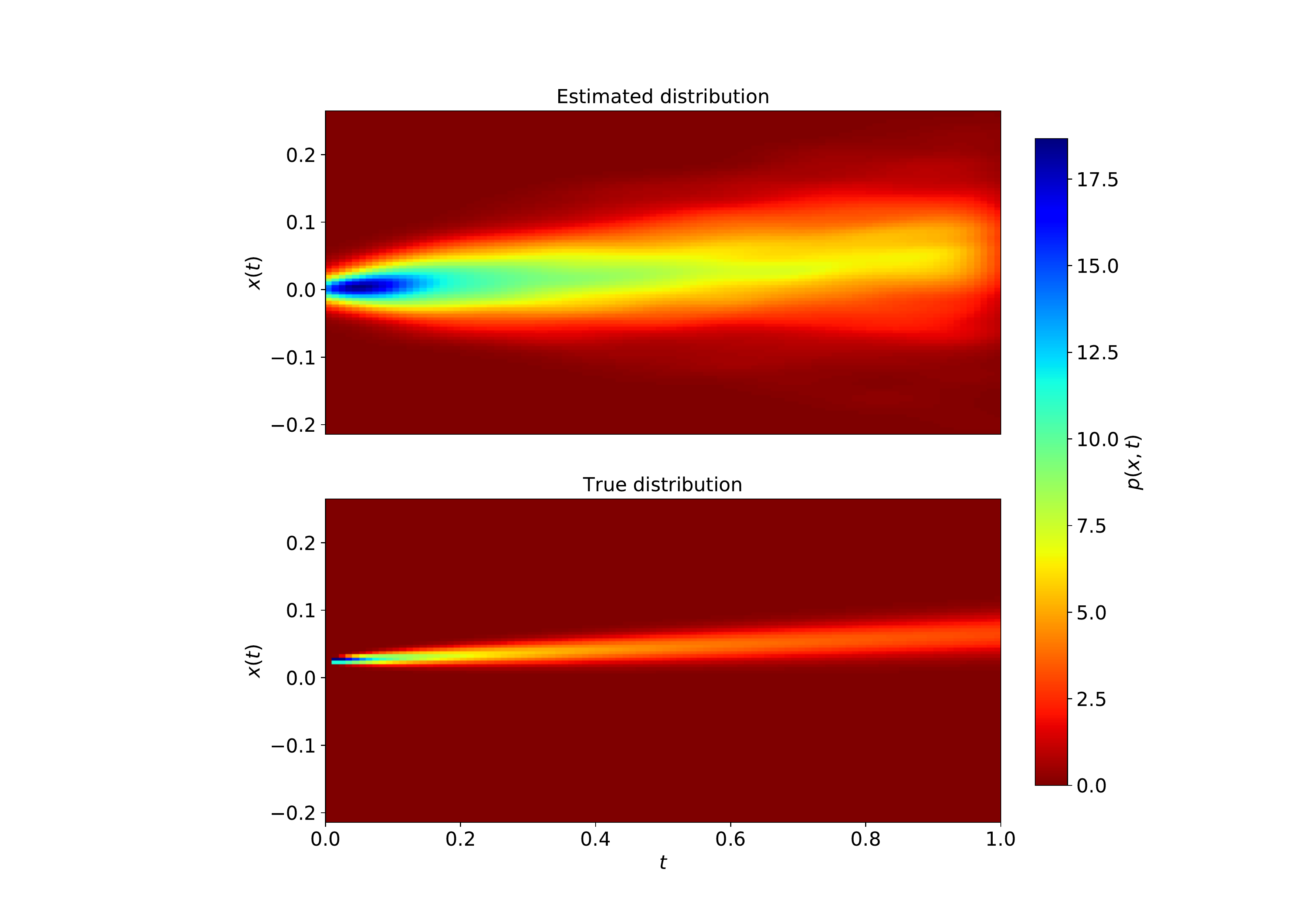}
	\caption{
		Empirical estimation of the distribution $q(x,t) \sim \mathcal{N}(\mu t, \sigma \sqrt{t})$ 
		generated by a Weiner process with drift $\mu$ and volatility $\sigma$.
		The estimation was generated using the procedure described in Sec.\ \ref{sec:nonstationary}.
		}
	\label{fig:weiner}
\end{figure*}

\chapter{Equipartitioning on networks}
\chaptermark{Equipartitioning on networks} %this is the chapter heading that will show on subsequent pages
\begin{quote}
As noted in the introduction, even over non-continuum domains the formalism of the generalized equipartitioning 
principle can be used to great effect. 
Here we extend the theory to the case of networks, in which an event probability density is defined over nodes and 
edges and a system resource is to be partitioning among the nodes and edges as well. 
	We derive the governing equations of such a system, showing that they correspond exactly with Eq.\ 
	\ref{eq:static-eq}, and give an example of their application by considering a model 
	of risk propagation on a power grid. 
	We close by identifying an extension of the power grid optimization problem to a more realistic contagion 
	process. 
\end{quote}

\section{Theory}
We consider time-dependent loss functions that account for both node and edge effects. 
Events occur at edge $a_{i,j}$ with probability $p_{i,j}$, though it will be seen that this formulation
can also account for events occuring exclusively in node space.
A network is more intrinsically detailed than the continuum; we must consider the case of allocation of resource 
$S$ to node $i$,
which we will denote by $S_i$, and allocating a (possibly different!) resource $T$ to edge $a_{ij}$, 
denoted by $T_{ij}$. 
The Lagrangian density is the probability density-weighted loss function, 
\begin{equation}
	\mathscr{L} = p_{i,j} L\left(S_i, S_{i,t}, S_j, S_{j,t}, T_{ij,t}, T_{ij,t} \right),
\end{equation}
where we denote the partial derivative of $g$ with respect to $x$ by $g_{,x}$.
The quantity $\int \sum_{i,j}\mathscr{L}\dee t$ is to be minimized subject to $m$ constraints of the form
\begin{equation}
	\int \sum_{i,j}f^{(\ell)}(S_i, S_{i,t}, S_j, S_{j,t}, T_{ij,t}, T_{ij,t} )\dee t = K_{\ell},
\end{equation}
where $\ell = 1,...,m$.
This results in the action given by
\begin{equation}
	J = \int \sum_{i,j}\mathscr{L}\dee t - 
	\sum_{\ell = 1}^m \lambda_{\ell}\left(K_{\ell} - \int \sum_{i,j} f^{(\ell)} \dee t\right)
\end{equation}
(In all generality, $p_{i,j}$ may also evolve in time, but we assume this evolution is governed
by a separate process.)
The optimal intertemporal allocation of resources is given by the Euler-Lagrange equations, which for $S_j$ 
read
\begin{equation}
	p_{ij}\left(\frac{\partial L}{\partial S_j} 
	- \frac{\partial}{\partial t} \frac{\partial L}{\partial S_{j,t}}\right)
	 + \sum_{\ell = 1}^m\lambda_{\ell}\left( \frac{\partial f^{(\ell)}}{\partial S_j}
	 - \frac{\partial }{\partial t} \frac{\partial f^{(\ell)}}{\partial S_{j,t}}\right) = 0.
\end{equation}
The form of the equations is identical for $S_i$ and $T_{ij}$.
From this it can be seen that, in all generality, the governing equations of such an optimization procedure 
are significantly more intimidating than those defined in the continuum, since we are now confronted with a 
system of coupled nonlinear PDEs.
We also note that, unlike in the continuum, the optimal allocation of resources here will depend integrally on 
the nature of the contagion mechanism within the network.
\section{Examples}

\subsection{HOT on networks: node allocation}
We re-do the theoretical analysis of highly-optimized tolerance (HOT) systems on networks, where the spreading mechanism 
considered here is generated by an event at $a_{ij}$ and annihilates both $i$ and $j$ before ceasing to propagate.
We wish to find the optimal allocation of resource at $i$, $S(i)$; we will not consider a resource
allocated to edges. 
The objective function is $\sum_{i,j}p_{ij}C_{ij}$, where we suppose $C_{ij} \propto S_i^{-\gamma_i}S_j^{-\gamma_j}$,
which we minimize subject to the constraint $\sum_i S_i = K$.
The action is 
\begin{equation}
	\sum_{i,j}p_{ij}S_i^{-\gamma_i}S_j^{-\gamma_j}A_{ij} - \lambda\left( K - \sum_i S_i\right).
\end{equation}
Differentiating and rearranging terms gives the scaling relationship
\begin{equation}\label{eq:hot-scaling-1}
	S_i^{\gamma_i + 1} \propto \sum_j p_{ij}S_j^{-\gamma_j}A_{ij}.
\end{equation}
Denoting the conditional probability of an event at $i$ given an event at $j$ by $p_{i|j}$,
we note
$$
\begin{aligned}
	\sum_{j}p_{ij}S_j^{-\gamma_j} &= \sum_j p_{i|j} S_j^{-\gamma_j}A_{ij}p_j \\
	&=\left \langle p_{i|j}S_j^{-\gamma_j} \right \rangle
	_{j \in \mathscr{N}(i)}, 
\end{aligned}
$$
where $\mathscr{N}(i)$ is the neighbor set of node $i$, so that Eq.\ \ref{eq:hot-scaling-1} becomes
\begin{equation}
	S_i \propto \left \langle p_{i|j}S_j^{-\gamma_j} \right \rangle^{\frac{1}{\gamma_i + 1}}
	_{j \in \mathscr{N}(i)}.
\end{equation}
The optimal allocation of resources across the network is given by the simultaneous solution of all $N$ (one
for each node) of these equations. 

\subsection{US power grid: edge allocation}

As a practical example we consider the minimization of cost in a power grid. 
Suppose that events occur at nodes (power generation facilities, substations, etc.) that impose costs on 
the rest of the network via propagation along edges (transmission lines). 
A resource $S_{ij}$ may be deployed on transmission lines to alleviate these costs (for simplicity we will
assume $S_{ij} \propto C_{ij}^{-1}$) that also imposes a (monetary) cost on the transmission of 
electricity; we wish to minimize the expected event cost subject to the constraint that the total monetary
cost throughout the network is within our budget.

\subsubsection{Neighborhood costs}
In the simplest case, events at node $i$ affect only $i$'s neighbor nodes and monetary cost 
scales linearly with resource placement.
Define $A$ to be the adjacency matrix of the power grid and assume that edges are undirected so that $A$ is symmetric.
The form of the action is then
\begin{equation}\label{eq:power-grid-neighborhood}
\begin{aligned}
	J &= \sum_i p_i \sum_{j \in \mathscr{N}(i)} S_{ij}^{-1} + \lambda \left(K - \sum_{ij}S_{ij}  \right) \\
	&= \sum_i p_i \sum_j S_{ij}^{-1}A_{ij} + \lambda \left(K - \sum_{ij}S_{ij}  \right)\\
	&= p_i S_{ij}^{-1}A_{ij} + p_j S_{ji}^{-1}A_{ji} + \cdots \text{other terms}\\
	&= \underbrace{(p_i + p_j)S_{ij}^{-1}A_{ij}}_{\text{since the network is undirected}} 
	+ \cdots \text{other terms}.
\end{aligned}
\end{equation}
Performing the optimization gives the optimal scaling of resource $S_{ij}$ as 
$S_{ij}^2 = \lambda^{-1}(p_i + p_j) A_{ij}$.
Saturation of the resource constraint yields $\lambda = K^{-2} 
\left( \sum_{i,j} (p_i + p_j)^{1/2}A_{ij}\right)^2$, 
whereupon substitution into the scaling relationship gives the analytical optimum to be 
\begin{equation}
	S_{ij} = \frac{K (p_i + p_j)^{1/2}A_{ij}}{\sum_{k, \ell }(p_k + p_{\ell})^{1/2}A_{k\ell}}.
\end{equation}
As noted above, these networked optimization problems depend heavily on the underlying contagion mechanism,
which can introduce its own distributional effects. 
In the problem considered above, the neighborhood contagion mechanism generates direct dependence of 
the allocation of the resource on node degree, which was not explicitly considered
in the problem formulation.
If the event probability distribution $p_i$ is not correlated with neighborhood structure, it is 
in fact possible under this contagion mechanism for the associated distribution $\Pr(p_i + p_j)$ to be nearly 
constant over the entire network. 
Consider the case of an infinite network with contagion mechanism and event distribution as 
defined above.
Since $\sum_i p_i = 1$, for any $\eps >0$ there exists $N \in \N$ such that for all $n \geq N$,
$ p_n < \eps$ \footnote{This is an incredibly weak statement.}.
Choose $i$ any node and let $j$ be its neighbor. 
Since the probability distribution is uncorrelated with neighborhood structure,
the pair $p_i$ and $p_j$ are statistically identical to any arbitrary pair of node event probabilities
$p_k$ and $p_{\ell}$. 
From the above, $p_k + p_{\ell} \leq 2 \eps$ for most pairs $k$ and $\ell$; 
we thus expect the associated distribution $\Pr(p_i + p_j)$ to be tightly centered around a small 
value.
Thus in the case of uncorrelation of event probability with neighborhood structure 
we can approximate $S_{ij} \approx \frac{KA_{ij}}{\sum_{k,\ell}A_{k,\ell}}$, so that
\begin{equation}\label{eq:power-grid-nhbd-approx}
\sum_j S_{ij} \approx \frac{K \rho_i}{\sum_k\rho_k},
\end{equation}
where $\rho_i$ is the node degree of $i$.
Figure \ref{fig:us-power-grid-neighborhood} displays the
theoretical optimum in Eq.\ \ref{eq:power-grid-nhbd-approx} 
along with results from simulation on the western US power grid dataset\footnote{
Data available at \url{http://konect.uni-koblenz.de/networks/opsahl-powergrid}}.
The event probability $p_i$ was set without dependence on $i$'s node degree; the inset plot notes that the
distribution of $p_i + p_j$ is nearly constant as predicted above. 
We thus treat $p_i + p_j \approx \text{constant}$ and 
plot the resulting linear fit between $\sum_j S_{ij}$ and $K\rho_i / \sum_{k} \rho_{k}$ in the main plot,
which demonstrates good agreement between simulation and theory.

\begin{figure}
\centering
	\includegraphics[width=\textwidth]{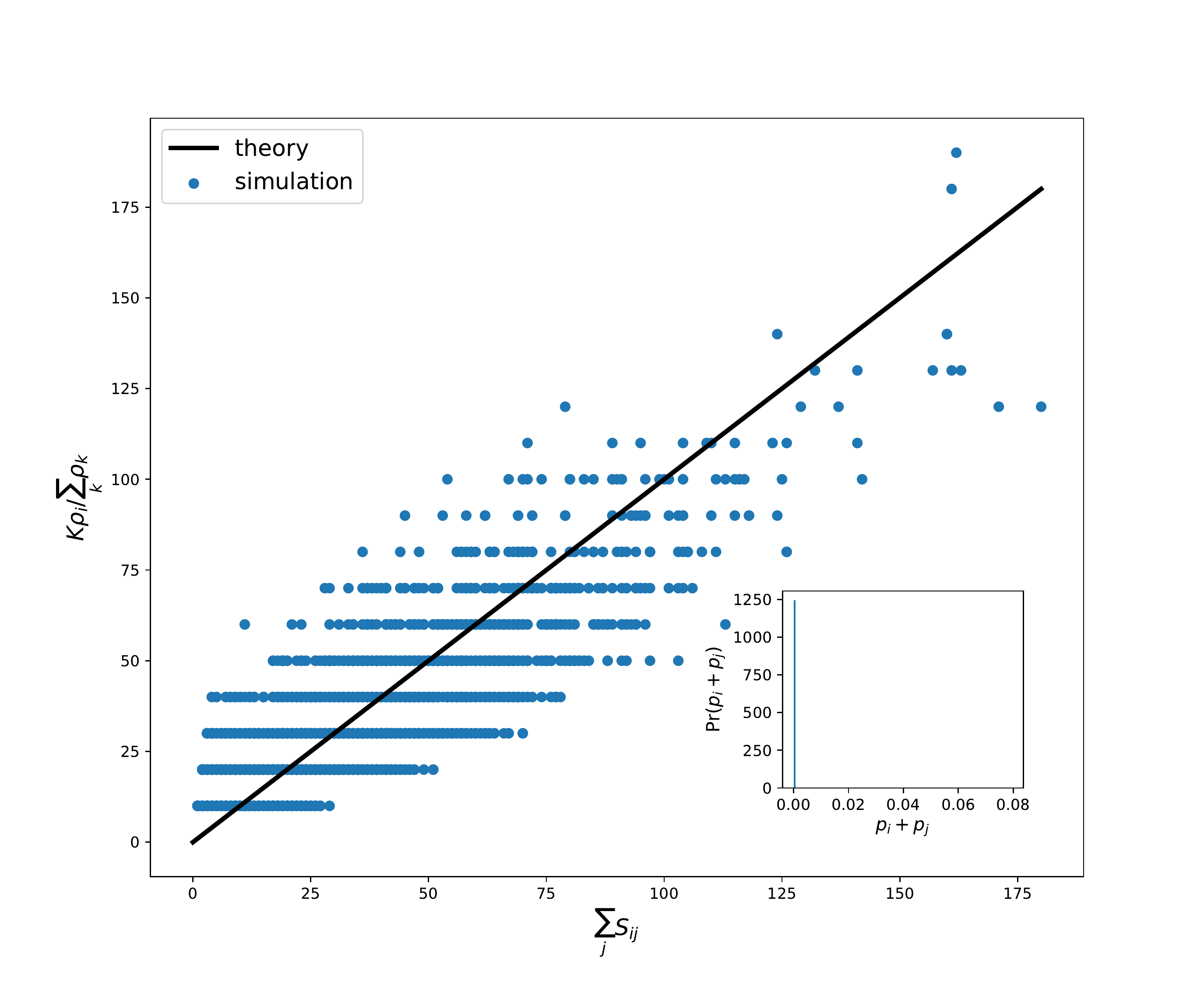}

	\caption{
		Simulated optimum of Eq.\ \ref{eq:power-grid-neighborhood}
		plotted against the theoretical approximate optimum 
		Eq.\ \ref{eq:power-grid-nhbd-approx} on the western US power grid dataset. 
		Eq.\ \ref{eq:power-grid-neighborhood} was minimized using simulated annealing, 
		the implementation of which is described in Appendix B. 
		Optimization was performed with the restriction $S_{ij} \in [1, \infty)$.
		The inset plot demonstrates that $\Pr(p_i + p_j)$ is highly centralized.
		}
	\label{fig:us-power-grid-neighborhood}
\end{figure}

\subsubsection{Subgraph costs with signal loss}
In a more realistic scenario, cost propagates across the subgraph connected to node $i$. 
If we assume slightly lossy transmission lines, signal drop across a path from $i$ to $j$ scales
approximately as $\exp\left( -d(i, j) \right)$, where we will take $d(i,j)$ to be the shortest path distance
between $i$ and $j$ \cite{miano2001transmission}. 
Assuming that event cost scales with signal strength and maintaining the linear 
monetary cost as above, we arrive at the action
\begin{equation}
	J = \sum_i p_i L\left(\text{Subgraph}(i)\right) - \lambda \left( K - \sum_{i,j}S_{ij} \right),
\end{equation}
where we define 
\begin{equation}\label{eq:subgraph-loss}
	L\left(\text{Subgraph}(i) \right) = \sum_{j:\ \text{path}(i,j) \text{ exists}}\
	\sum_{\substack{k \in \text{SP}(i,j) \\ k \succ k'}} e^{-d(i,k')}S^{-1}_{k'k}.
\end{equation}
Here $\succ$ is an ordering on a path such that $k \succ k'$ if $k$ appears after $k'$ in traversing 
the path from $i$ to $j$, and $\text{SP}(i,j)$ denotes the shortest path from $i$ to $j$.
Define $\mathscr{P}_i:G \times G \rightarrow \Z^{\geq 0}$ to be $\mathscr{P}_i(k',k) = \text{\# of times 
$a_{k',k}$ appears in a shortest path in $i$'s subgraph}$.
Then Eq.\ \ref{eq:subgraph-loss} can be rewritten 
\begin{equation}
	L\left(\text{Subgraph}(i) \right) = \sum_{k,k'} \mathscr{P}_i(k',k) e^{-d(i,k')}S_{k',k}^{-1}.
\end{equation}
Extension of this present research could focus on simulating the above problem and comparing the results with 
actual costs in power grids, e.g., damage caused by outages.

\appendix
\addappheadtotoc

\titleformat{\chapter}[hang] 
{\normalfont\huge\bfseries}{\chaptertitlename\ \thechapter:}{1em}{} 

\chapter{Derivations}
%\label{journal:app:Params}
\section{Field equations under dynamic coordinates}
We derive the representation of Hamilton's field equations in the case of moving coordinates.
Recall from standard field theory (under stationary coordinates) 
for $S$ a field over $x \in \R^N$ that the Lagrangian density is
given by 
\begin{equation}
\mathscr{L} = T(S,\dot{S}, \nabla S, x, t) - V(S, \dot{S}, \nabla S, x, t),
\end{equation}
with the corresponding action integral
\begin{equation}
	J = \int \dee{x} \int \dee{t}\ \mathscr{L}.
\end{equation}
Defining the conjugate momentum as $\Pi = \frac{\delta \mathscr{L}}{\delta \dot{S}}$, the Hamiltonian is given by
\begin{equation}\label{eq:app1:Hamiltonian}
	\mathscr{H} = \frac{\partial S}{\partial t}\Pi - \mathscr{L},
\end{equation}
with Hamilton's equations thus given as
\begin{align}\label{eq:app1:hamiltons-equations}
	\frac{\partial \Pi}{\partial t} &= -\frac{\delta \mathscr{H}}{\delta S} \\
	\frac{\partial S}{\partial t} &= \frac{\delta \mathscr{H}}{\delta \Pi}.
\end{align}
We claim that this formalism generalizes exactly as one would expect when the Eulerian operator 
$\frac{\partial}{\partial t}$ is replaced by the Lagrangian operator $\frac{D}{Dt} = \frac{\partial}{\partial t}
+ \frac{dx_i}{dt}\frac{\partial}{\partial x_i}$, where we are employing the Einstein summation convention.
Specifically, we claim that 
\begin{theorem}
Hamilton's equations derived from the Hamiltonian 
\begin{equation}
	\mathscr{H} = \frac{DS}{Dt}\Pi - \mathscr{L}
\end{equation}
	are equivalent to the Euler-Lagrange equations derived from the Lagrangian given in Eq. 
	\ref{eq:lagrangian-dynamic} with 
$\frac{\partial}{\partial t} \mapsto \frac{D}{Dt}$.
\end{theorem}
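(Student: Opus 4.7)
The plan is to derive the two systems of equations separately, then show they coincide term-by-term. First, I would compute the Euler-Lagrange equation obtained by varying the action $J = \int dt\,dx\,\mathscr{L}$ with the understanding that $\mathscr{L}$ depends on $S$ and $DS/Dt$ (writing $\dot S \equiv DS/Dt$ for brevity). Varying $S \mapsto S + \delta S$ and integrating by parts with respect to $D/Dt$ (justified below) produces
\begin{equation}
\frac{\partial \mathscr{L}}{\partial S} - \frac{D}{Dt}\frac{\partial \mathscr{L}}{\partial \dot S} = 0.
\end{equation}
For the specific Lagrangian in Eq.\ \ref{eq:lagrangian-dynamic} this collapses to $\frac{D}{Dt}\bigl(\dot S^{\alpha-1}\bigr) = -p\,\partial_S L - \lambda_\ell \partial_S f^{(\ell)}$.

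Second, I would perform the Legendre transform. Setting $\Pi = \partial \mathscr{L}/\partial \dot S$ (which for our Lagrangian gives $\Pi = \dot S^{\alpha - 1}$), I would compute $\mathscr{H} = \dot S\, \Pi - \mathscr{L}$ and vary independently in $S$ and $\Pi$. By the usual cancellation in Legendre transforms,
\begin{equation}
\frac{\delta \mathscr{H}}{\delta \Pi} = \dot S + \Pi \frac{\delta \dot S}{\delta \Pi} - \frac{\partial \mathscr{L}}{\partial \dot S}\frac{\delta \dot S}{\delta \Pi} = \dot S,
\end{equation}
which is precisely $DS/Dt = \delta \mathscr{H}/\delta \Pi$. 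Similarly, holding $\Pi$ fixed, $\delta \mathscr{H}/\delta S = -\partial \mathscr{L}/\partial S$, so the Euler-Lagrange equation rewrites as $D\Pi/Dt = -\delta \mathscr{H}/\delta S$. This matches Eq.\ \ref{eq:hamiltons} exactly.

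The main obstacle is justifying the integration by parts step with the material derivative, since the coordinates themselves evolve as $dx_i/dt = g_i(x,t)$ and $D/Dt$ is not self-adjoint in the naive sense. The key identity I would invoke is the Reynolds transport theorem (or equivalently $\partial_t + \partial_i(g_i \cdot )$ acting on the volume element), which shows that for test functions $\delta S$ vanishing on the spacetime boundary,
\begin{equation}
\int dt\,dx \;\eta\, \frac{D\delta S}{Dt} = -\int dt\,dx\; \delta S \left(\frac{D\eta}{Dt} + \eta\,\nabla \cdot g\right) + \text{b.t.}
\end{equation}
The extra $\eta\,\nabla \cdot g$ contribution appears worrying, but it is absorbed by the symmetric contribution from varying through $\dot S = \partial_t S + g_i \partial_i S$ when one also accounts for spatial integration by parts on the $g_i \partial_i \delta S$ piece. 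Verifying this cancellation cleanly, ideally by working in Lagrangian coordinates (where $D/Dt$ becomes a plain $\partial_t$ and the Jacobian accounts for the difference) and then pulling back, is the part of the argument that requires the most care; everything else is algebraic bookkeeping of the Legendre transform.
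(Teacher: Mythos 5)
Your overall route is the same as the paper's: compute the Euler--Lagrange equation for the Lagrangian density viewed as a function of $S$ and $DS/Dt$, carry out the Legendre transform $\Pi = \partial\mathscr{L}/\partial \dot S = (DS/Dt)^{\alpha-1}$, and observe that Hamilton's first equation reproduces the Euler--Lagrange equation while the second is just the inversion $DS/Dt = \Pi^{1/(\alpha-1)}$. The paper organizes this as two lemmas (one computing $\frac{\delta}{\delta S}\frac{1}{2}\left(\frac{DS}{Dt}\right)^2$, one computing the Euler--Lagrange equations of the pure kinetic action) followed by the Legendre-transform bookkeeping, but the content is identical to your plan.

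The genuine problem is your treatment of the integration by parts. You correctly identify that
\begin{equation*}
\int \dee{t}\,\dee{x}\;\eta\,\frac{D\,\delta S}{Dt} = -\int \dee{t}\,\dee{x}\;\delta S\left(\frac{D\eta}{Dt} + \eta\,\nabla\cdot g\right) + \text{b.t.},
\end{equation*}
but your claim that the $\eta\,\nabla\cdot g$ term is ``absorbed by the symmetric contribution from varying through $\dot S = \partial_t S + g_i\partial_i S$'' is circular: the spatial integration by parts on the $g_i\partial_i\delta S$ piece is precisely where that term comes from, so there is no second, independent contribution left over to cancel it. Passing to Lagrangian coordinates does not rescue the cancellation either, since $\dee{x} = J\,\dee{a}$ with $DJ/Dt = J\,\nabla\cdot g$, and differentiating the Jacobian regenerates exactly the same term. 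The divergence term is real; it vanishes only when the coordinate flow is incompressible, $\nabla\cdot g = 0$. Two honest ways out: (i) restrict to $\nabla\cdot g = 0$, which is in effect what the paper silently does when it commutes $dx_i/dt$ past $\partial/\partial x_i$ inside its lemmas; or (ii) observe that the identical extra term appears in both the Euler--Lagrange equation and in Hamilton's first equation when both are derived from the same action, so the claimed \emph{equivalence} survives even though neither equation then has exactly the form quoted in Eq.~\ref{eq:hamiltons}. As written, your proof asserts a cancellation that does not occur, and that step would fail if carried out explicitly.
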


We first show that
\begin{lemma}
	The following holds: $\frac{\delta}{\delta S}\frac{1}{2}\left(\frac{DS}{Dt}\right)^2
	= -\frac{D}{Dt}\frac{DS}{Dt} 
\equiv -\frac{D^2S}{Dt^2}$.
\end{lemma}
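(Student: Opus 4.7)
The plan is to compute the variational derivative by first variation. Perturb $S \mapsto S + \varepsilon \eta$ with $\eta$ compactly supported, expand $\tfrac{1}{2}(D(S+\varepsilon\eta)/Dt)^2$ to first order in $\varepsilon$, and read off the coefficient of $\eta$ in the resulting integral. Since $D/Dt = \partial_t + v_i \partial_{x_i}$ is linear in its argument (with $v_i = dx_i/dt$ treated as a prescribed external field, independent of $S$), one immediately gets
\begin{equation*}
\frac{d}{d\varepsilon}\bigg|_{\varepsilon=0} \int \dee{x}\dee{t}\ \tfrac{1}{2}\left(\tfrac{D(S+\varepsilon\eta)}{Dt}\right)^2 = \int \dee{x}\dee{t}\ \frac{DS}{Dt}\frac{D\eta}{Dt}.
\end{equation*}
The task then reduces to moving the $D/Dt$ off $\eta$ via integration by parts.

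Next I would establish the key identity: for smooth, compactly supported $f,g$,
\begin{equation*}
\int \dee{x}\dee{t}\ f\,\frac{Dg}{Dt} = -\int \dee{x}\dee{t}\ g\,\frac{Df}{Dt} - \int \dee{x}\dee{t}\ g f\,(\partial_{x_i} v_i),
\end{equation*}
obtained by splitting $D/Dt = \partial_t + v_i\partial_{x_i}$, integrating the time piece by parts in $t$ and the spatial piece by parts in $x_i$, and regrouping. This shows that $D/Dt$ is formally skew-adjoint exactly when the coordinate velocity field is divergence-free, $\partial_{x_i} v_i = 0$ (which is the natural assumption for coordinate motion that conserves volume, and is tacitly what the statement of the lemma requires; under stationary coordinates it is trivially satisfied and recovers the standard result). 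Applying this identity with $f = DS/Dt$ and $g = \eta$ yields
\begin{equation*}
\int \dee{x}\dee{t}\ \frac{DS}{Dt}\frac{D\eta}{Dt} = -\int \dee{x}\dee{t}\ \eta\,\frac{D}{Dt}\frac{DS}{Dt},
\end{equation*}
so by the fundamental lemma of the calculus of variations $\delta/\delta S$ of the original integrand is $-D^2 S/Dt^2$, as claimed.

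The main obstacle is the integration-by-parts step for the material derivative. Unlike $\partial_t$, the operator $D/Dt$ mixes time and space derivatives through the variable coefficients $v_i(x,t)$, so naive integration by parts produces the spurious divergence term $(\partial_{x_i} v_i)\,f g$. I would therefore either (i) invoke the volume-preserving/incompressibility assumption $\partial_{x_i} v_i = 0$ explicitly, or (ii) reparametrize along the characteristic curves $\dot{x} = v(x,t)$ of the coordinate flow, on which $DS/Dt$ becomes an ordinary time derivative $d/d\tau$ and the classical one-dimensional integration by parts applies directly; the Jacobian of the characteristic change of variables is constant in time precisely under (i). Either route gives the lemma; everything else is the standard variational calculation and requires no novelty.
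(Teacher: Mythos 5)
Your proof is correct, and it reaches the same identity by a different (and more careful) route than the paper. The paper simply applies the Euler--Lagrange operator $-\bigl(\partial_t\,\partial_{\dot S} + \partial_{x_i}\,\partial_{S_{,x_i}}\bigr)$ directly to the density $\tfrac12\bigl(\dot S + v_i S_{,x_i}\bigr)^2$, obtaining $-\partial_t\bigl(\tfrac{DS}{Dt}\bigr) - \partial_{x_i}\bigl(v_i\tfrac{DS}{Dt}\bigr)$ and then immediately rewriting this as $-\tfrac{D}{Dt}\tfrac{DS}{Dt}$; you instead derive the variational derivative from the first variation and an explicit adjoint identity for $D/Dt$. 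The payoff of your route is that it surfaces the term the paper silently discards: pulling $v_i = dx_i/dt$ through $\partial_{x_i}$ produces the extra contribution $-(\partial_{x_i}v_i)\,\tfrac{DS}{Dt}$, so the lemma as stated holds only when the coordinate velocity field is divergence-free (or spatially constant). The paper's one-line passage from its first displayed line to its second tacitly assumes exactly this, without saying so; your proof makes the hypothesis explicit and offers the characteristics reparametrization as a clean way to see why volume preservation is the right condition. (You may also note, for your own reassurance, that the final equality in the paper's displayed computation, $-\tfrac{D}{Dt}\tfrac{DS}{Dt} = \tfrac{D^2S}{Dt^2}$, has a sign typo; your conclusion $-\tfrac{D^2S}{Dt^2}$ is the one consistent with the lemma statement and with the subsequent use in the theorem.)
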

\begin{proof}
Computing directly, we have
\begin{equation*}
	\begin{aligned}
		\frac{\delta}{\delta S}\frac{DS}{Dt} &=  -\left( \frac{\partial}{\partial t}
		\frac{\partial}{\partial \dot{S}} + \frac{\partial }{\partial x_i}
		\frac{\partial}{\partial S_{,x_i}}  \right)
		\frac{1}{2}\left( \frac{\partial S}{\partial t} + \frac{dx_i}{dt}
		\frac{\partial S}{\partial x_i} \right)^2\\
		&= -\left(\frac{\partial}{\partial t} + \frac{dx_i}{dt}\frac{\partial}{\partial x_i}  \right)
		\left( \frac{\partial S}{\partial t} + \frac{dx_i}{dt}
		\frac{\partial S}{\partial x_i} \right) \\
		&= -\frac{D}{Dt}\frac{DS}{Dt} = \frac{D^2S}{Dt^2},
	\end{aligned}
\end{equation*}
as claimed.
\end{proof}
We next show that 
\begin{lemma}
The Euler-Lagrange equations for the action 
$J = \int \dee{x}\ \int \dee{t}\ \frac{1}{\alpha}\left( \frac{DS}{Dt}\right)^{\alpha}$ is given by
$-(\alpha - 1)\left( \frac{DS}{Dt} \right)^{\alpha - 2}\frac{D^2S}{Dt^2} = 0$, 
in perfect analogy with the field and particle cases.
\end{lemma}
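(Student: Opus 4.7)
The plan is to derive the Euler--Lagrange equation directly from the Lagrangian density $\mathscr{L}=\frac{1}{\alpha}(DS/Dt)^{\alpha}$ and then recognize the result as a material‐derivative chain rule, in direct parallel with the preceding lemma. Since $\mathscr{L}$ has no explicit dependence on $S$, the functional derivative $\delta J/\delta S$ will consist entirely of terms coming from the dependence on $\partial_t S$ and $\partial_{x_i}S$ through $DS/Dt=\partial_t S + (dx_i/dt)\,\partial_{x_i}S$. Thus $\partial\mathscr{L}/\partial S = 0$ and the computation reduces to the two Euler--Lagrange ``derivative'' terms.

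First I would compute the two relevant partials. Using the chain rule,
\begin{equation*}
\frac{\partial \mathscr{L}}{\partial(\partial_t S)} = \left(\frac{DS}{Dt}\right)^{\alpha-1},\qquad
\frac{\partial \mathscr{L}}{\partial(\partial_{x_i} S)} = \left(\frac{DS}{Dt}\right)^{\alpha-1}\frac{dx_i}{dt}.
\end{equation*}
Substituting into the Euler--Lagrange equation gives
\begin{equation*}
\frac{\delta J}{\delta S} = -\frac{\partial}{\partial t}\!\left(\frac{DS}{Dt}\right)^{\alpha-1}
- \frac{\partial}{\partial x_i}\!\left[\left(\frac{DS}{Dt}\right)^{\alpha-1}\frac{dx_i}{dt}\right].
\end{equation*}
The key step is to pull the factor $dx_i/dt$ through the spatial derivative so that the two terms combine into a single material derivative. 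Treating $dx_i/dt$ as prescribed by the motion of the coordinates (and therefore insensitive, under the variational derivative with respect to $S$, to the spatial derivative acting on the remaining factor), the right-hand side becomes $-\left(\partial_t + (dx_i/dt)\,\partial_{x_i}\right)(DS/Dt)^{\alpha-1} = -\frac{D}{Dt}(DS/Dt)^{\alpha-1}$.

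The last step is a straightforward power-rule application in the material derivative,
\begin{equation*}
\frac{D}{Dt}\!\left(\frac{DS}{Dt}\right)^{\alpha-1} = (\alpha-1)\left(\frac{DS}{Dt}\right)^{\alpha-2}\frac{D^2 S}{Dt^2},
\end{equation*}
which immediately yields $-(\alpha-1)(DS/Dt)^{\alpha-2}\,D^2S/Dt^2 = 0$, as claimed. Consistency with the preceding lemma is recovered at $\alpha = 2$: the power prefactor becomes $(DS/Dt)^0 = 1$ and one reads off $-D^2S/Dt^2 = 0$, matching $\delta/\delta S \big[\tfrac{1}{2}(DS/Dt)^2\big] = -D^2S/Dt^2$.

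The main obstacle is the step combining the Eulerian time and spatial derivatives into the single operator $D/Dt$; this requires justifying that the coefficients $dx_i/dt$, which are features of the moving coordinate system rather than of the field $S$ itself, may be passed through $\partial_{x_i}$ when acting on the relevant factor. Once that point is accepted, the calculation collapses to the chain rule and mirrors the $\alpha=2$ lemma almost verbatim; otherwise one would need to carry additional terms of the form $(DS/Dt)^{\alpha-1}\,\partial_{x_i}(dx_i/dt)$, breaking the clean analogy with the field and particle cases.
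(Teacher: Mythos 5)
Your proposal follows essentially the same route as the paper: apply the Euler--Lagrange operator, note that $\partial\mathscr{L}/\partial S=0$, compute the two conjugate-derivative terms, recombine them into $-\tfrac{D}{Dt}(DS/Dt)^{\alpha-1}$, and finish with the power rule. You are in fact slightly more careful than the paper, which silently commutes $dx_i/dt$ past $\partial_{x_i}$ without remarking on the potential extra term $(DS/Dt)^{\alpha-1}\,\partial_{x_i}(dx_i/dt)$ that you correctly flag.
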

\begin{proof}
To this end, we compute $\delta J = 0$ and find
\begin{equation*}
	\begin{aligned}
		\frac{\delta}{\delta S}\frac{1}{\alpha}\left( \frac{DS}{Dt}\right)^{\alpha}
		&= \frac{1}{\alpha}\left( \frac{\partial}{\partial S} 
		- \frac{\partial }{\partial t}\frac{\partial}{\partial \dot{S}} - \frac{\partial}{\partial x_i}
		\frac{\partial}{\partial S_{,x_i}}\right)\left( \frac{\partial S}{\partial t} + \frac{dx_i}{dt}
		\frac{\partial S}{\partial x_i}\right)^{\alpha}\\
		&= -\left(\frac{\partial}{\partial t} - \frac{dx_i}{dt}\frac{\partial}{\partial x_i}\right)
\left( \frac{\partial S}{\partial t} + \frac{dx_i}{dt}
		\frac{\partial S}{\partial x_i}\right)^{\alpha-1}\\
		&= -(\alpha - 1)\left( \frac{\partial S}{\partial t} + \frac{dx_i}{dt}
		\frac{\partial S}{\partial x_i}\right)^{\alpha-2}
			\left(\frac{\partial}{\partial t} +\frac{dx_i}{dt}\frac{\partial}{\partial x_i}\right)
		\left( \frac{\partial S}{\partial t} + \frac{dx_i}{dt}
		\frac{\partial S}{\partial x_i}\ \right)
			\\
		&= -(\alpha - 1)\left(\frac{DS}{Dt}\right)^{\alpha -2}\frac{D^2S}{Dt^2},
	\end{aligned}
\end{equation*}
by the definition of the material derivative and the above derivation for the second material derivative.
\end{proof}
We can now prove the theorem.
\begin{proof}
Calculation of the conjugate momentum proceeds in the standard manner,
resulting in $\Pi \equiv \frac{\delta \mathscr{L}}{\delta \dot{S}} = \left( \frac{\partial S}{\partial t} + \frac{dx_i}{dt}\frac{\partial S}{\partial_{x_i}} \right)^{\alpha - 1} \times 1$. 
Forming the Hamiltonian in accordance with Eq.\ \ref{eq:app1:Hamiltonian} results in 
\begin{equation}
	\begin{aligned}
		\mathscr{H} &= \frac{DS}{Dt}\Pi - \mathscr{L} \\
		&= \frac{\alpha - 1}{\alpha}\left( \frac{DS}{Dt} \right)^{\alpha} 
		+ p(x)L(S(x)) + \lambda_{\ell}f^{(\ell)}(S(x)) \\
		&= \frac{\alpha - 1}{\alpha} \Pi^{\frac{\alpha}{\alpha - 1}}
		+ p(x)L(S(x)) + \lambda_{\ell}f^{(\ell)}(S(x)).
	\end{aligned}
\end{equation}
Hamilton's equations are given by Eqs.\ \ref{eq:app1:hamiltons-equations} and take the form
\begin{align}
	\frac{D\Pi}{Dt} &= -p(x)\frac{\partial L}{\partial S} 
	- \lambda_{\ell}\frac{\partial f^{(\ell)}}{\partial S} \\
	\frac{DS}{Dt} &= \Pi^{\frac{1}{\alpha -1}}.
\end{align}
Noting that $\frac{D\Pi}{Dt} = \frac{D}{Dt}\left(\frac{DS}{Dt}\right)^{\alpha - 1}
= (\alpha - 1)\left( \frac{DS}{Dt} \right)^{\alpha - 2}\frac{D^2S}{Dt^2}$ by above results, 
we find that the first of Hamilton's equations is identical to the Euler-Lagrange equation, which 
was the desired result.
\end{proof}

\section{Weiner process probability distribution}
This is essentially a standard derivation that we repeat and elucidate here for completeness's sake. 
We consider the overdamped Langevin equation $\dee{X_t} = \mu\ \dee{t} + \sigma \dee{W_t}$ 
with initial condition $X_0 = x_0$, where this equation is understood in the It\^{o} sense.
By convention we denote the Weiner process by $W_t$. We have $\sigma > 0 $ and 
are interested in deriving the probability distribution $q(x,t)$ of finding a realization of the process
at $x$ at time $t$.
By It\^{o}'s lemma and integration by parts, the Fokker-Planck equation that governs the evolution of $q$ 
on $\R$ is given by 
\begin{equation}\label{eq:qdist-fpe}
	\frac{\partial q}{\partial t} = -\mu \frac{\partial q}{\partial x} + \frac{\sigma^2}{2}
	\frac{\partial^2 q}{\partial x^2},\quad q(x,0) = q_0(x).
\end{equation}
As in Section \ref{sec:misspec}, we will take the initial condition to be $q(x,t) = \delta(x)$.
We solve Eq.\ \ref{eq:qdist-fpe} by means of the Fourier transform, which we will define here as 
$F(\xi) = \frac{1}{\sqrt{2\pi}}\int_{\R} f(x) e^{\i \xi x}\dee{x}$.
Transforming both sides of the equation and the initial condition, we form the ODE 
\begin{equation}
	\frac{\dee{F}}{\dee{t}} = -\left( \mu \i \xi + \frac{\sigma^2}{2}\xi^2 \right)F(t),\quad F(0) = 1,
\end{equation}
the solution to which is given by 
\begin{equation}
	F(t) = \exp\left[-\left(\mu \i \xi + \sigma^2 \xi^2/2  \right) t  \right].
\end{equation}
Setting $\zeta = \mu t$ and $\nu = \sigma\sqrt{t}$, we recognize $F(t)$ as the characteristic function 
of a Gaussian distribution with mean $\zeta$ and standard deviation $\nu$.
Thus $q(x,t)$ is given by 
\begin{equation}
	\begin{aligned}
		q(x,t) &= \frac{1}{\sqrt{2 \pi \nu^2}}\exp\left(\frac{-(x - \zeta)^2}{2\nu^2}  \right) \\
		&= \frac{H(t)}{\sqrt{2\pi \sigma^2 t}}\exp\left( \frac{-(x - \mu t)^2}{2 \sigma^2 t} \right), 
	\end{aligned}
\end{equation}
as claimed above.

\chapter{Software}

\section{Simulated annealing}

Simulated annealing is a Markov Chain Monte Carlo (MCMC) algorithm closely related to the 
celebrated Metropolis-Hastings algorithm. 
We describe it briefly here and detail our software implementation.
\medskip

\noindent
Consider a canonical ensemble exchanging energy, but not particles, with an external heat bath. 
The probability of such an ensemble being in a particular energy state $E$ is given by $
\Pr(E) = \frac{1}{Z} \exp\left( - \beta E \right)$, where we have set Boltzmann's constant to unity in the 
appropriate units, $\beta$ is the inverse temperature of the ensemble, and 
$Z = \sum_{E'} \exp\left(-\beta E' \right)$ is the partition function.
Thus the maximum probability state is that with lowest energy; if the system is such that energy 
in a state $x$ is 
given by the Hamiltonian $\mathcal{H}(x) = E_x$, one may (in principle) find the system configuration 
$x$ that minimizes the system's energy. 
Simulated annealing uses this fact to perform stochastic global optimization.
Algorithm \ref{alg:simulated-annealing} displays the algorithm.
Unlike the standard implementation of simulated annealing in scientific Python, our implementation 
does not assume any underlying set or space in which states $x$ are required to lie;
our implementation can find states that minimize arbitrary Hamiltonians defined 
over elements in any set.\footnote{The standard
implementation can be found at 
\url{https://docs.scipy.org/doc/scipy-0.18.1/reference/generated/scipy.optimize.basinhopping.html}}

\begin{algorithm}
	\caption{
		The simulated annealing algorithm. 
		The function $a$ is a perturbation function that slightly modifies the state $x$ to a 
		``nearby" state $x'$. 
		$P$ is a probability measure on states (in physical scenarios proportional to 
		$\exp\left( - \beta E \right)$), $\eps$ is a numerical tolerance, $B$ is a function that 
		yields successive inverse temperatures, and $\tau$ is a time delay against 
		which to check numerical tolerance.
		}
     \begin{algorithmic}[1]
	     \Procedure{SimulatedAnnealing}{$\mathcal{H}$, $a$, $P$, $\eps$, $B$, $\beta_0$, $x_0$, $\tau$}
	     \State $t \gets 0$
	     \State $x \gets x_0$
	     \State $\beta(t) \gets \beta_0$
	     \While{$\beta < \infty$ and $|E(t + \tau) - E(t)| > \eps$}
		\State $E(t) \gets \mathcal{H}(x)$
		\State $x' \gets a(x)$
		\State $E'(t) \gets \mathcal{H}(x')$
		\State $u \sim \mathcal{U}(0,1)$
		\If{$E' < E$ or $P(E', \beta(t)) / P(E, \beta(t)) \geq u$}
			\State $x \gets x'$
		\EndIf
		\State $t \gets t + 1$
		\State $\beta(t) \gets B(\beta(t-1))$
	     \EndWhile
	\EndProcedure
    \end{algorithmic}
    \label{alg:simulated-annealing}
\end{algorithm}

Clearly the construction of the perturbation function $a$ is critical to the effectivness of this algorithm.
This is a domain-specific question; we will focus here on the cases where $x \in \R^n$ 
or $x \in M^{m \times n}(R)$, the space of $m \times n$ matrices over the ring (or monoid) $R$.
\begin{itemize}
	\item When $x \in \R^n$, we select $k \sim \mathcal{U}_{\text{discrete}}(0, n_{\max})$
		elements of $x$ for perturbation, where $n_{\max} \leq n$.
	We then set $x_i \gets x_i + \xi$ for each selected $x_i$, where
		$\xi \sim \mathcal{N}(0, \sigma(x))$ and $\sigma(x)$ 
		is the standard deviation of the elements of $x$.
	Successive applications of $a$ thus define a type of normal random walk on $\R^n$; 
		this is similar to the original Metropolis jump kernel.
	\item When $x \in M^{m \times n}(\R)$ the perturbation is essentially identical to that 
		outlined above (selecting $k$ random elements of the matrix instead of the vector).
	When $R = \Z$ or $R = \Set{0,1}$ we must alter the algorithm so that 
		$\forall x, y, z \in R$, $x \mapsto yx + z \in R$ as well.
	This is accomplished simply by choosing
		an appropriate probability measure $P$ over $R$, drawing from 
		this distribution $p \sim P$ and generating $x_{ij} \gets x_{ij} + \sigma(x) p$.
		In the particularly simple (and useful!) case where $R = \Set{0,1}$, 
		the initial random selection of matrix elements acts is the only randomization used 
		and the selected elements are just bit-flipped.
\end{itemize}
In some cases we may want to restrict elements to certain subsets of $R$. 
This is accomplished by checking whether the new point $x'$ is in the desired subset $\Sigma$
(which we take to be a compact set); if it is not, 
$x'$ is assigned to be $x'' = \argmin_{x'' \in \partial \Sigma} ||x' - x''||_2$,
where $\partial \Sigma$ is the boundary of $\Sigma$.
Figure \ref{fig:sim-ann-conv} demonstrates our implementation of the simulated annealing algorithm converging
to the global minimum of Eq. \ref{eq:power-grid-neighborhood} with the restriction that $x_i \in [1, \infty)$.

\begin{figure}
\centering
	\includegraphics[width=\textwidth]{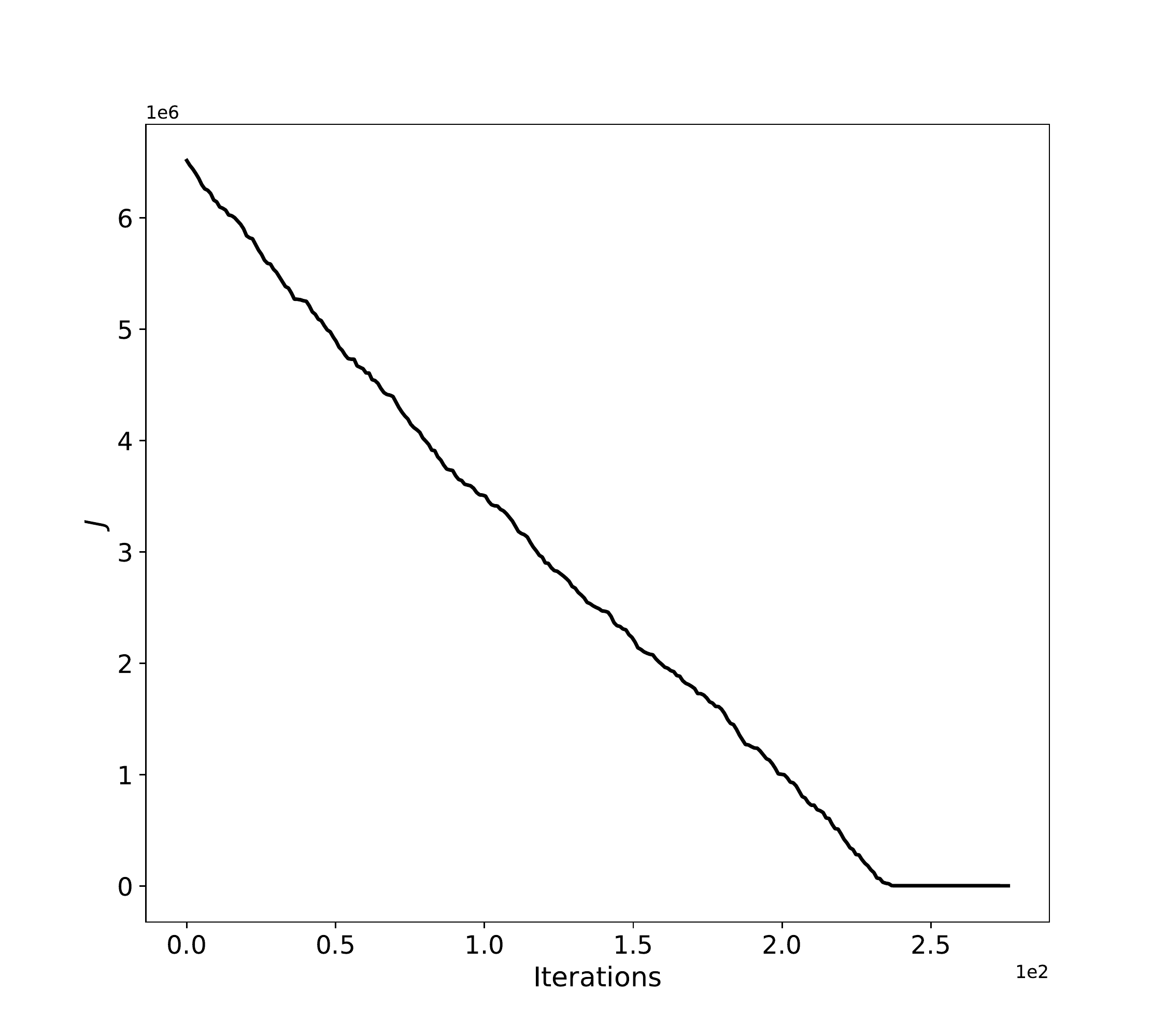}
	\caption{The above simulated annealing algorithm converging to 
	the global minimum of the action given in Eq.\ \ref{eq:power-grid-neighborhood}.
	In this case, $x \in M^{6594 \times 6594 }(\R^{\geq 1})$.
	}
	\label{fig:sim-ann-conv}
\end{figure}

\newpage

\bibliography{example}
\bibliographystyle{unsrt}

\end{document}